\newcommand{\cf}{{\mathrm{cf}}}
\newcommand{\FF}{{\mathrm{ff}}}
\newcommand{\PM}{{\mathrm{pm}}}
\renewcommand{\vec}[1]{{\ensuremath{\boldsymbol{\mathrm #1}}}}
\newcommand{\ten}[1]{\ensuremath{\boldsymbol{\mathsf{#1}}}}
\newcommand{\vdot}{\boldsymbol{\mathsf{\ensuremath\cdot}}}
\newcommand{\RR}{\mathbb{R}}
\newcommand{\diver}{\ensuremath{\operatorname{div}}}
\DeclarePairedDelimiter{\norm}{\lVert}{\rVert}
\newcommand{\comment}[1]{}
\newtheorem{corollary}{Corollary}
\title{Effective coupling conditions for arbitrary flows in Stokes--Darcy systems}
\author{Elissa         
    Eggenweiler and
	Iryna Rybak\footnote{University of Stuttgart, Institute of Applied Analysis and Numerical Simulation, Pfaffenwaldring 57, 70569 Stuttgart, emails: elissa.eggenweiler@ians.uni-stuttgart.de, rybak@ians.uni-stuttgart.de} \\
	 \\
	}
\date{ }
\begin{document}

\maketitle

\begin{abstract}
Boundary conditions at the interface between the free-flow region and the adjacent porous medium is a key issue for physically consistent model-ing and accurate numerical simulation of flow and transport processes in coupled systems due to the interface driven nature of such processes. 
Interface conditions available in the literature have several weak points: most of them are suitable only for flows parallel to the fluid--porous interface, some are restricted to specific boundary value problems, and  others contain unknown model parameters which still need to be deter-mined. These facts severely restrict the variety of applications that can be successfully modeled. We propose new interface conditions which are valid for arbitrary flow directions. These coupling conditions are rigorously derived using the theory of homogenization and boundary layers. All effective parameters appearing in these conditions are computed numeri-cally based on the geometrical configuration of the coupled system. The developed conditions are validated by comparison of numerical simulation  
results for the coupled Stokes--Darcy model and  the pore-scale resolved 
model. In addition, the new interface conditions are compared with the classical conditions to demonstrate the advantage of the proposed conditions. 
\end{abstract}
\vspace{3ex}
\textbf{Keywords:}\\
interface conditions, porous medium, free flow, homogenization, boundary layer
\\[2ex]
\textbf{AMS subject classification:}\\
35Q35, 76D07, 76M10, 76M50, 76S05
\\[2ex]

\section{Introduction}

Coupled flow systems containing a free-flow region and an adjacent porous medium
appear in a variety of environmental settings and industrial applications 
such as surface-subsurface flow interactions, industrial filtration and 
drying processes~\cite{Beaude_etal_19, Hanspal_etal_09, Reuter_etal_19}. Modeling flow and transport in such systems is a challenge since the behavior of the coupled model is very sensitive to the choice of interface conditions.

In the literature, there exist different model formulations to describe fluid 
flows in coupled systems, depending on the flow regime and the application of interest~\cite{Dawson_08, Discacciati_Quarteroni_09, 
Mosthaf_Baber_etal_11, Sochala_Ern_Piperno_09, Yang_etal_19}. In the most general case, the Navier--Stokes equations are applied to describe the free flow and the multiphase Darcy's law is used in the porous medium~\cite{Discacciati_Quarteroni_09, Mosthaf_Baber_etal_11, Yang_etal_19}. Interface conditions for the transport of chemical species and energy are needed as well, e.g.~\cite{AlazmiVafai, Mosthaf_Baber_etal_11}. For flows at low Reynolds numbers, the Stokes equations can be considered in the free-flow domain. 
In order to describe surface flow interactions with unsaturated porous media, the Stokes equations are coupled to the Richards equation~\cite{Rybak_etal_15}. When the porous medium is fully saturated, the single-phase Darcy law is considered in the subsurface and coupled to the Stokes equations in the free-flow domain~\cite{Angot_etal_17,Discacciati_Miglio_Quarteroni_02, Layton_Schieweck_Yotov_03, Bars_Worster_06,  OchoaTapia_Whitaker_95}.
The latter combination of models is the most widely used both for mathematical modeling, numerical analysis and development of efficient numerical algorithms, e.g.~\cite{Carraro_etal_13, Discacciati_Gerardo-Giorda_18, Discacciati_Miglio_Quarteroni_02, Jaeger_Mikelic_09, Kanschat_Riviere_10, Lacis_Bagheri_17,  Rybak_Magiera_14}. 
Other simplifications of the Navier--Stokes equations such as the shallow water equations, kinematic or diffusive waves can be coupled with different subsurface flow models~\cite{Dawson_08, Magiera_etal_15, Reuter_etal_19, Sochala_Ern_Piperno_09}. In this manuscript, we will be interested, however, in coupled Stokes--Darcy problems. 

Different sets of interface conditions are proposed in the 
literature to couple the Stokes and Darcy flow equations at the fluid--porous 
interface~\cite{Carraro_etal_15, Discacciati_Miglio_Quarteroni_02, Jaeger_etal_01, Lacis_Bagheri_17, Bars_Worster_06}. The most widely used coupling conditions are the conservation of mass, the balance of normal forces and the Beavers--Joseph or the 
Beavers--Joseph--Saffman condition for the tangential velocity component~\cite{Beavers_Joseph_67, Discacciati_Miglio_Quarteroni_02, Jones_73, Kanschat_Riviere_10, Layton_Schieweck_Yotov_03, Nield_09, Saffman_71}. However, these conditions are suitable for flows parallel to the fluid--porous interface only and thus not applicable for general filtration problems~\cite{Eggenweiler_Rybak_20}. 
Moreover, the Beavers--Joseph parameter needs to be fitted in order to take surface roughness and permeability of the interfacial zone into account. The exact location of the sharp fluid-porous interface is uncertain as well. There exist several recommendations for the interface location in the literature~\cite{Lacis_Bagheri_17, Rybak_etal_19}, however, only for circular solid grains leading to isotropic porous media.

Different alternatives to the classical interface conditions have been 
deve-loped 
within the last decades. Several modifications of the Beavers--Joseph--Saffman
condition have been derived rigorously using the theory of 
homogeni-zation 
and 
boundary layers for flows parallel to the fluid--porous
interface~\cite{Jaeger_Mikelic_96, Jaeger_Mikelic_09, Jaeger_etal_01}. Using the same techniques, coupling conditions for perpendicular flows to the interface are derived in~\cite{Carraro_etal_15}, however, they are limited to very specific boundary value problems. All
effective parameters appearing in coupling conditions derived by means of homogenization are computed numerically solving unit cell problems and boundary layer problems within a cut-off domain.

Alternative coupling concepts which are not restricted to the flow direction 
are proposed in~\cite{Angot_etal_17, Jackson_Rybak_etal_12}. However, these models contain several unknown parame-ters that still need to be determined and the specification of these effective coefficients is not an easy task. Another generalization of the classical interface conditions is developed in~\cite{Lacis_Bagheri_17, Lacis_etal_20}. 
In contrast to the previous concepts, the effective coefficients staying in the coupling conditions can be computed 
numeri-cally 
based on the pore-scale geometry of the coupled problem.
However, these conditions are validated only for parallel flows to the interface and very simple geometries, leaving the applicability of the proposed interface conditions to general flow problems an open question.

To summarize, existing alternative sets of interface conditions include un-known 
model parameters that need to be fitted~\cite{Angot_etal_17, Jackson_Rybak_etal_12}, are restricted to specific boundary value problems~\cite{Carraro_etal_15} and are neither validated for complicated geometri-cal 
configurations such as anisotropic media nor for arbitrary flow directions~\cite{Lacis_Bagheri_17, Lacis_etal_20}. Therefore, a need exists for interface conditions which are applicable to arbitrary fluid flows and anisotropic porous media and do not include any unknown coefficients. 

The objectives of this paper are (i) to rigorously derive a new set of coupling conditions which is valid for arbitrary flow directions to the  
interface, (ii) to compute all necessary effective model parameters based on the geometrical information of the coupled system, (iii) to validate the developed interface conditions numerically and (iv) to compare the proposed set of interface 
con-ditions 
with the classical one.

The paper is organized as follows. Geometrical configuration of the coupled flow problem, modeling assumptions, pore-scale resolved model and macroscale Stokes--Darcy model with the classical and new interface conditions are described in~\cref{sec:models}. The proposed set of interface conditions is derived rigorously in~\cref{sec:IC}. The developed conditions are validated numerically and compared with the classical coupling conditions for different flow problems in~\cref{sec:validation}. The conclusions follow in~\cref{sec:conclusions}.

\section{Coupled flow models}\label{sec:models}

In this section, we present the pore-scale model and the coupled macroscale Stokes--Darcy model with two different sets of interface conditions: the classical and the newly developed one. The pore-scale model with a fully resolved geometry serves as a basis for the homogenization approach in \cref{sec:IC} and for the validation of the derived interface conditions in \cref{sec:validation}.

\subsection{Geometrical configuration and assumptions}\label{sec:assumptions}

For the case of brevity, we present the derivation of the interface conditions for the two-dimensional case. For the pore-scale description we consider the flow region consisting of the free-flow domain and the pore-space of the porous medium, $\Omega^\varepsilon = \Omega_\FF \cup \Omega_\PM^\varepsilon \subset \RR^2$ (\cref{fig:geometry}a). The porous medium is assumed to be periodic, i.e. it is constructed by the periodic repetition of the scaled unit cell (\cref{tab:effective-properties}). The scale separation parameter is \mbox{$\varepsilon = \ell \slash L \ll 1$}, where $\ell$ is the characteristic pore size and $L$ is the length of the domain. For the macroscale description the coupled flow domain consists of the free-flow region and the adjacent porous-medium region, $\Omega = \Omega_\FF \cup \Omega_\PM \subset \RR^2$  (\cref{fig:geometry}c).

The interface $\Sigma$ is considered to be flat and simple, i.e. no storage of mass and momentum at the interface or transport of these properties along the interface is possible. 
For the sake of simplicity, we consider a horizontal interface $\Sigma$. Therefore, the unit normal vector on $\Sigma$ pointing outward from the porous-medium domain is $\vec n = \vec e_2$. 
Different porous-medium configurations (isotropic and  anisotropic) will be considered and different locations of the sharp fluid--porous interface $\Sigma$ (\cref{fig:geometry}b) for the macroscale model will be studied. 

\begin{figure}[htbp]
\centering
    \begin{minipage}{0.05\textwidth} 
        \vspace{-19ex}(a)  
   \end{minipage}
   \hspace{-4.ex}
   \begin{minipage}{0.4\textwidth} 
        \includegraphics[width=0.975\textwidth]{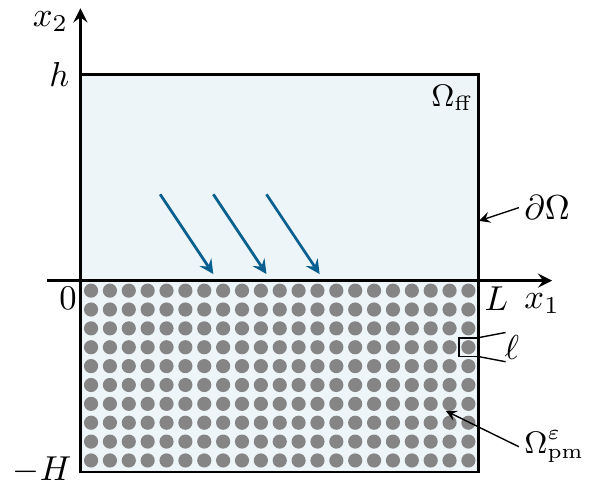} 
   \end{minipage}
   \hspace{-2.25ex}
       \begin{minipage}{0.05\textwidth} 
        \vspace{-6ex}(b)
   \end{minipage}
   \hspace{-2.ex}
   \begin{minipage}{0.15\textwidth} 
        \vspace{10ex}
        \includegraphics[width=0.925\textwidth]{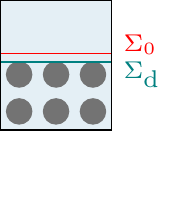} 
   \end{minipage}
   \hspace{-3.75ex}
   \begin{minipage}{0.05\textwidth} 
        \vspace{-19ex}(c)  
   \end{minipage}
   \hspace{-4.5ex}
   \begin{minipage}{0.4\textwidth} 
        \includegraphics[width=0.975\textwidth]{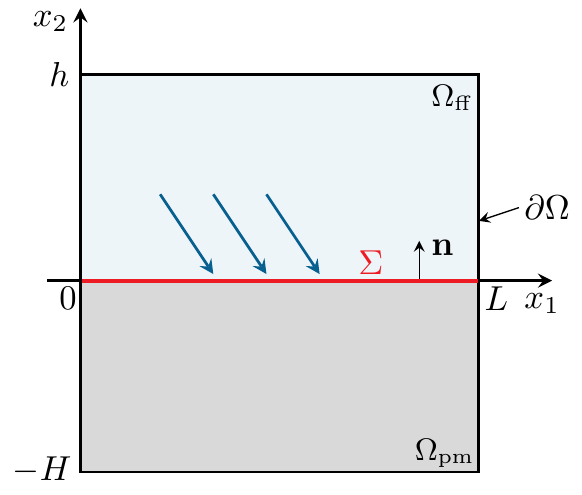} 
   \end{minipage}
   \caption[width=\linewidth]{Schematic geometrical configuration of the flow domain at the pore scale (a) and the macroscale (c). Two possible locations of the sharp interface (b).}
   \label{fig:geometry}
\end{figure}
We consider a single-phase flow of an incompressible fluid at low Reynolds numbers. 
The fluid is assumed to have constant viscosity and contain a single chemical species. This fluid occupies the free-flow domain and fully saturates the porous medium. The solid phase is supposed to be non-deformable and rigid leading to a constant porosity of the medium. The temperature of the fluid and the solid phase is assumed to be equal and constant, therefore no energy balance equation is needed. 

For the considered flow problems the fluid velocity in the free-flow region is of order $\mathcal O(1)$ and the Darcy velocity is much smaller, i.e. of order $\mathcal O(\varepsilon^2)$. These assumptions, however, break down close to the interface $\Sigma$ and deviations from the Darcy velocity and the free-flow velocity are expected there. 

For the theoretical derivation of new interface conditions, we consider the Stokes system with periodic boundary conditions on the lateral boundaries and an inflow condition on the upper boundary.
However, this requirement can be relaxed for numerical simulations (\cref{sec:validation-case-2,sec:validation-case-3}).

\subsection{Pore-scale model}
The incompressible fluid flow at low Reynolds numbers is described by the non-dimensional \textit{Stokes equations} in the flow region $\Omega^\varepsilon$ completed with the no-slip condition at the boundaries of the solid inclusions $\partial \Omega^{\varepsilon} \setminus \partial \Omega$ and appropriate boundary conditions at the external boundary $\partial \Omega$:
\vspace{-1ex}
\begin{subequations}
\begin{align}
- \Delta & \vec v^\varepsilon+ \nabla p^\varepsilon = \vec 0, \quad 
\operatorname{div} \vec{v}^{\varepsilon} = 0 \quad \textnormal{in } \Omega^{\varepsilon} , \quad 
\int_{\Omega_\FF} p^\varepsilon \ \text{d} \vec x = 0 ,
\\
\vec v^{\varepsilon} &= \vec 0 \quad \textnormal{on } \partial \Omega^{\varepsilon} \setminus \partial \Omega,    
\quad \vec{v}^{\varepsilon} = (v_1^\text{in}(x_1),0) \quad \text{on } \{x_2 = h\},   \label{eq:pore-scale-BC}
\\
v_2^{\varepsilon} &= \frac{\partial v_1^\varepsilon}{\partial x_2} = 0 \quad \text{on } \{x_2 = -H\}, \quad 
\{\vec{v}^{\varepsilon}, p^{\varepsilon} \}  \textnormal{ is $L$-periodic in $x_1$}.  \label{eq:pore-scale-3}
\end{align}
\label{eq:pore-scale}
\end{subequations}
Here $\vec v^\varepsilon=(v_1^\varepsilon, v_2^\varepsilon)$ and $p^\varepsilon$ are the fluid velocity and pressure. The inflow velocity $v_1^\text{in}(x_1)$  has to be chosen in such a way that the periodicity condition is  fulfilled. 

\subsection{Macroscale model}
Under the assumptions presented in~\cref{sec:assumptions} the macroscale model formulation  consists of the Stokes equations in the free-flow domain $\Omega_\FF$, Darcy's law in the porous-medium region $\Omega_\PM$ and an appropriate set of coupling conditions at the sharp fluid--porous interface $\Sigma$.

\subsubsection{Flow models}

The fluid flow in the free-flow domain $\Omega_\FF$ is governed by the \textit{Stokes equations} in dimensionless form and the following boundary conditions on the external boundary $\partial \Omega_\FF \setminus \Sigma$:
\begin{subequations}
\begin{align}
&-\Delta \vec v^\FF+ \nabla p^\FF = \vec 0, \quad \operatorname{div}\vec{v}^\FF = 0 \quad \textnormal{in } \Omega_\FF,  \quad \int_{\Omega_\FF} p^\FF \ \text{d} \vec x = 0, \label{eq:macro-ff-a}
\\[-1.ex]
&\vec{v}^\FF = (v_1^\text{in}(x_1),0) \quad \text{on } \{x_2 = h\}, 
\quad 
\{\vec{v}^\FF,  p^\FF \}  \textnormal{ is $L$-periodic in $x_1$}. \label{eq:macro-ff-b-periodic}
\end{align}
\label{eq:macro-ff}
\end{subequations}
Here, $\vec v^\FF = (v_1^\FF, v_2^\FF)$ and $p^\FF$ are the free-flow velocity and pressure and the inflow velocity $v_1^\text{in}$ is the same as in \cref{eq:pore-scale-BC}.

The flow in the porous-medium domain $\Omega_\PM$ is described by the
non-dimen-sional 
\textit{Darcy law} with the following boundary conditions on  $\partial \Omega_\PM \setminus \Sigma$:
\begin{subequations}
\begin{align}
    \vec v^\PM &= -\ten K^\varepsilon \nabla p^\PM, \quad \operatorname{div} \vec v^\PM = 0 \quad \textnormal{in } \Omega_{\PM}, \label{eq:macro-pm-a}
    \\
    v_2^\PM  = 0 & \quad  \text{on } \{x_2 = -H\} , 
    \quad 
    p^\PM   \textnormal{ is $L$-periodic is $x_1$}  \label{eq:macro-pm-b-periodic}, 
\end{align}
\label{eq:macro-pm}
\end{subequations}
where $\vec v^\PM = (v_1^\PM, v_2^\PM)$ and $p^\PM$ are the Darcy velocity and pressure and $\ten K^\varepsilon$ denotes the permeability tensor given by~\cref{eq:permeability}. 

\emph{Remark 2.1} (Boundary conditions).
Note that the normal velocity component on the upper and lower boundaries in conditions~\eqref{eq:pore-scale-BC},~\eqref{eq:pore-scale-3},~\eqref{eq:macro-ff-b-periodic} and \eqref{eq:macro-pm-b-periodic} 
is not necessarily zero, but should be chosen to satisfy the conservation of mass.

\subsubsection{Classical interface conditions}
For the Stokes--Darcy system \eqref{eq:macro-ff} and \eqref{eq:macro-pm} a variety of interface conditions has been proposed in the literature.  
However, independent of the flow regime, the most commonly used interface conditions are: \\ 
the \textit{conservation of mass} \\[-3.ex]
\begin{equation}\label{eq:IC-mass}
\vec v^\FF \vdot \vec n = \vec v^\PM \vdot \vec n \qquad \text{on} 
\;\Sigma,
\end{equation}
the {\it balance of normal forces}
\vspace*{-0.5ex}
\begin{equation}\label{eq:IC-momentum}
-\vec n \vdot \ten T (\vec v^\FF, p^\FF) \vdot \vec n = p^\PM 
\qquad \text{on} \; \Sigma,
\end{equation}
and the {\it Beavers--Joseph condition}~\cite{Beavers_Joseph_67,Jones_73} for the tangential velocity component
\begin{equation}\label{eq:IC-BJJ}
(\vec v^\FF - \vec v^\PM ) \vdot \vec \tau
-\alpha^{-1}\,\sqrt{\ten K^\varepsilon}
\vec \tau \vdot \nabla \vec v^\FF \vdot \vec n
= 0  \qquad \text{on} \; \Sigma.
\end{equation}
Here, $\alpha>0$ is the Beavers--Joseph parameter which is typically taken $\alpha = 1$ in the literature, $\vec n$ is the normal unit vector on $\Sigma$ pointing outward from the porous medium (\Cref{fig:geometry}c) 
and $\vec \tau$ is a tangential unit vector on $\Sigma$. The coupling conditions \cref{eq:IC-momentum,eq:IC-BJJ} are presented for the non-symmetric form of the stress~tensor 
$\ten T(\vec v^\FF, p^\FF) = \nabla \vec v^\FF - p^\FF \ten I$ in accordance to~\cref{eq:macro-ff-a}. However, also in symmetrized form, this set of coupling conditions is unsuitable for arbitrary flows to the porous medium~\cite{Eggenweiler_Rybak_20}.

\subsubsection{New interface conditions}

In \cref{sec:IC}, new \textit{interface conditions} 
which are valid for arbitrary flow directions to the interface are derived. We summarize them here for a complete macroscale model formulation
\begin{align}
    \vec v^\FF \vdot \vec n &= \vec v^\PM \vdot \vec n \quad \text{on } \Sigma, \label{eq:NEW-mass}
    \\
    p^\PM &=  p^\FF - \frac{\partial v_2^\FF}{\partial x_2} + N_s^{bl}  \frac{\partial v_1^\FF}{\partial x_2} \hspace{3.5mm}  \text{on } \Sigma, \label{eq:NEW-momentum}
    \\
    \vec v^\FF \vdot \vec \tau &= 
    - \varepsilon \vec N^{bl} \frac{\partial v_1^\FF}{\partial x_2} 
    \vdot \vec \tau
    + \varepsilon^2 \sum_{j=1}^2 \vec M^{j,bl} \frac{\partial p^\PM}{\partial x_j}
    \vdot \vec \tau  \quad \text{on } \Sigma. \label{eq:NEW-tangential}
\end{align}
Here, $\vec M^{j,bl}$ is the boundary layer constant given by~\cref{eq:BL-beta-exponentialDecay-2} which requires solving the boundary layer problem~\eqref{eq:BL-beta}, $\vec N^{bl}$ and $N_s^{bl}$ are the boundary layer constants given by~\cref{eq:BL-t-velocity-constant,eq:BL-t-pressure-constant} corresponding to problem~\eqref{eq:BL-t}.

With~\cref{eq:NEW-mass} we recovered the mass balance across the interface~\eqref{eq:IC-mass}. Further, the proposed coupling condition~\eqref{eq:NEW-momentum} is an extension of the balance of normal forces~\eqref{eq:IC-momentum} when considering the Stokes equations~\eqref{eq:macro-ff} with the non-symmetric stress tensor $\ten T(\vec v^\FF, p^\FF) = \nabla \vec v^\FF - p^\FF \ten I$ as it is done within this manuscript.
The interface condition~\eqref{eq:NEW-tangential} can be understood as a jump in tangential velocities
\begin{align}
    & ( \vec v^\FF - \vec v^{\PM})\vdot\vec\tau + 
     \varepsilon \vec N^{bl} \frac{\partial v_1^\FF}{\partial x_2} \vdot \vec \tau
    =
    \varepsilon^2\sum_{j=1}^2 \left( \vec M^{j,bl} + k_{1j} \right)\frac{\partial p^\PM}{\partial x_j} \vdot \vec \tau  \quad \text{on } \Sigma. \label{eq:new-IC-tangential-velocity-BJJ}
\end{align}
Note that  condition~\eqref{eq:new-IC-tangential-velocity-BJJ} is similar to the Beavers--Joseph condition~\eqref{eq:IC-BJJ} with the proportionality factor $-\varepsilon N_1^{bl} \sim \sqrt{\ten K^\varepsilon} \alpha^{-1}$. The difference between these conditions 
is that the right hand side in~\cref{eq:new-IC-tangential-velocity-BJJ} is not necessarily zero.

\emph{Remark 2.2}.
The proposed interface conditions~\eqref{eq:NEW-mass}--\eqref{eq:NEW-tangential} reduce to the interface conditions derived by J\"ager and Mikeli\'c under the same assumptions on the flow~\cite{Carraro_etal_13,Jaeger_Mikelic_96, Jaeger_Mikelic_00, Jaeger_etal_01}. We note that our interface conditions are more general, since no additional assumptions on the flow regime or flow direction are made.

\section{Derivation of the macroscale model formulation}\label{sec:IC}

To derive the macro-scale model formulation, we apply the theory of
homogeni-zation~\cite{Auriault_etal_09,Hornung_97}. 
Therefore, we study the behavior of the solutions to the pore-scale problem~\eqref{eq:pore-scale} when $\varepsilon \rightarrow 0$.
In this limit, the equations in the free-flow region remain valid, i.e. the Stokes equations describe the fluid flow in $\Omega_\FF$, whereas Darcy's law is obtained as the upscaled 
equation in the porous-medium domain $\Omega_\PM$.

\subsection{Darcy's law: Homogenization}

In this section, we summarize the derivation of Darcy's law and computation of permeability by means of homogenization with two-scale asymptotic expansions \cite{Auriault_etal_09,Hornung_97}. We assume that there exist asymptotic expansions of the velocity and pressure
\begin{subequations} \label{eq:asym-exp}
\begin{align}
    \vec v^\varepsilon (\vec x) &\approx \varepsilon^2 \vec v_0 (\vec x ,\vec y) + \varepsilon^3 \vec v_1(\vec x ,\vec y) + \mathcal{O}(\varepsilon^4), \\
     p^\varepsilon (\vec x) &\approx  p_0(\vec x ,\vec y) + \varepsilon  p_1(\vec x ,\vec y) + \mathcal{O}(\varepsilon^2), 
     \end{align}  
\end{subequations}
where $\vec y = \vec x/\varepsilon$ and  $\vec v_j, p_j$ are $1$-periodic in $\vec y$ for $j=0,1,2, \ldots$ 
Computing the derivatives $\nabla = \nabla_\vec{x} + \varepsilon^{-1} \nabla_\vec{y}$, substituting expansions~\eqref{eq:asym-exp} into the pore-scale problem~\eqref{eq:pore-scale} in $\Omega_\PM^\varepsilon$ and combining terms with the same degree of $\varepsilon$, we get~\cite[chap.~1.4]{Hornung_97}:
\begin{equation}
\label{eq:Darcy-homog}
    \vec v_0 = - \sum_{j=1}^2 \vec w^j\frac{\partial p_0}{\partial x_j}, \qquad
    p_0 = p^\PM,  \qquad  
    p_1 = - \sum_{j=1}^2 \pi^j\frac{\partial p_0}{\partial x_j}.
\end{equation}    

\begin{figure}[htbp]
  \centering
  \includegraphics[scale=0.85]{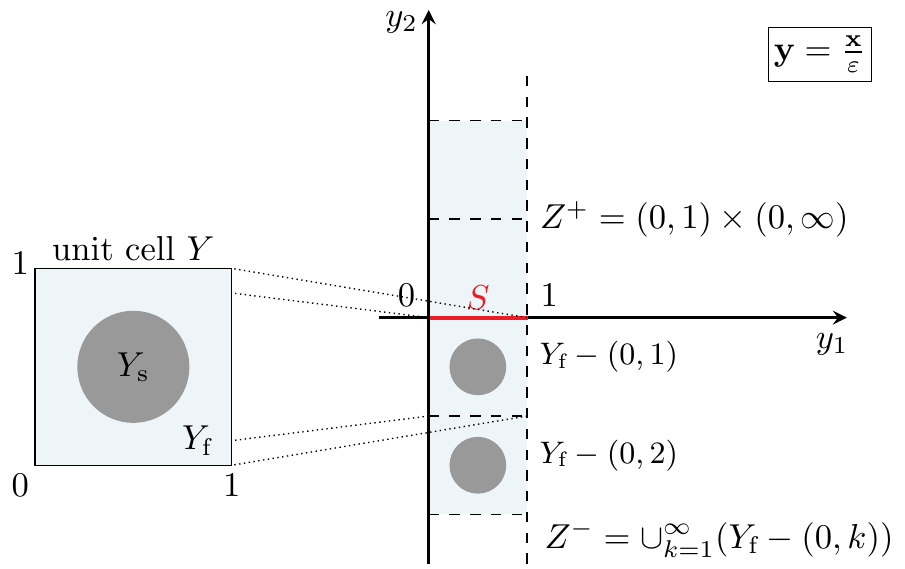}
  \caption{Unit cell and infinite boundary layer stripe $Z^{bl} = Z^+ \cup S \cup Z^-$.}
  \label{fig:stripe}
\end{figure}

We introduce the unit cell $Y=(0,1)^2$ and denote by $Y_\text{f}$ and $Y_\text{s}$ its fluid and solid part (\cref{fig:stripe}).
The functions $\vec{w}^{j} = (w_1^j,w_2^j)$ and $\pi^j$ are the solutions to the following cell problems for $j=1,2$:  
\begin{subequations}\label{eq:cell-prob}
\begin{align}
- \Delta_ {\vec{y}} \vec{w}^{j}  &+ \nabla_{\vec{y}} \pi^j = 
\vec{e}_j, \quad
\label{eq:cell-prob-momentum}
\operatorname{div}_{\vec{y}} \vec{w}^{j}  =0 \quad \text{in 
$Y_\text{f}$}, \quad \int_{Y_\text{f}} \pi^j \ \text{d} \vec{y} =0,
\\
\vec{w}^{j} &=  \vec{0} \quad \text{on $\partial Y_\text{f} \setminus \partial Y$},  \quad
\{ \vec{w}^{j}, \pi^j \} \text{ is 1-periodic} \text{ in } \vec{y}.
\end{align}
\end{subequations}

As a result of the homogenization process the non-dimensional divergence-free Darcy velocity is obtained
\begin{equation*}
\vec v^\PM(\vec x) = -\ten K^\varepsilon \nabla p^\PM (\vec x), \quad \operatorname{div} \vec v^\PM (\vec x) = 0, \quad \vec x \in \Omega_\PM,
\end{equation*}
where the permeability tensor $\ten K^\varepsilon$ is defined as follows
\begin{equation}\label{eq:permeability}
\ten K^\varepsilon = \varepsilon^2 \ten K = \varepsilon^2 (k_{ij})_{i,j=1,2} , \qquad k_{ij} = \int_{Y_\text{f}} 
w_{i}^j(\vec y) \ \text{d}  \vec{y}  .
\end{equation}

\subsection{Interface conditions: Homogenization and boundary layer theory}
In this section, we present the rigorous derivation of the new set of interface conditions~\eqref{eq:NEW-mass}--\eqref{eq:NEW-tangential} following the strategy proposed in~\cite{Carraro_etal_15,Jaeger_Mikelic_96}.
We define the space of test functions
\begin{align}\label{eq:Vperio}
V_\text{per}(\Omega^{\varepsilon}) = &\{ \vec{\phi} \in H^1({\Omega^{\varepsilon}})^2: \vec{\phi}= \vec 0 \text{ on } \partial \Omega^{\varepsilon} \setminus \partial \Omega, \ \vec{\phi}= \vec 0 \text{ on } \{ x_2 = h \}, \notag
\\
& \ \ \phi_2=0 \text{ on } \{ x_2 = -H \}, \ \vec{\phi} \textnormal{ is $L$-periodic in $x_1$} \}.
\end{align}
Our goal is to derive accurate approximations of the pore-scale velocity and pressure.
In order to achieve this, the following steps have to be done:
\\
\emph{Step 1.} 
    Construct the first approximations of the pore-scale velocity and pressure, $\vec v^{0,\varepsilon}_\text{approx}$ and $p^{0,\varepsilon}_\text{approx}$, and define the corresponding velocity and pressure error functions $\vec U^{0,\varepsilon}~=~\vec v^\varepsilon-~\vec v^{0,\varepsilon}_\text{approx}$ and $
    P^{0,\varepsilon} = p^\varepsilon - p^{0,\varepsilon}_\text{approx}$ (\cref{sec:first-approx}).
\\
\emph{Step 2.}  Write a variational formulation with respect to these error functions in order to detect the  
terms of low order with respect to $\varepsilon$.
\\
\emph{Step 3.}  
    Improve the first velocity and pressure approximations by adding appropriate boundary layer correctors and auxiliary functions (\cref{sec:first-approx,sec:continue-asym-exp,sec:trace-continuity,sec:lower-boundary,sec:compressibility}).
    Note that the corrections of the approximations  lead to changes in the corresponding error functions. We denote the subsequent approximations and error functions by rising index $n \in \mathbb{N}_0$, i.e.  $\vec v_\text{approx}^{n,\varepsilon}$, $p_\text{approx}^{n,\varepsilon}$, 
    $\vec U^{n,\varepsilon}$, $P^{n,\varepsilon}$.
\\
\emph{Step 4.}  
    Whenever the goal is achieved, prove error estimates of the model approximation (\cref{cor:4,cor:5}) which are necessary for the proof of two-scale convergence. However, the latter is out of the scope of this manuscript.

For the sake of clarity, we waive writing symbols $\text d \vec x$ and $\text{d} S$ at the end of  volume and boundary integrals, we  assume  $\vec \varphi \in V_\text{per}(\Omega^{\varepsilon})$, and write $q =q(\vec x)$ for $\vec x \in \Omega$
and $\nabla = \nabla_\vec{x}$, if not stated otherwise.

\subsubsection{First approximation of velocity and pressure}\label{sec:first-approx}
The variational formulation corresponding to the Stokes system~\eqref{eq:pore-scale} is given by
\begin{equation}\label{eq:weak-pore-scale}
    \int_{\Omega^\varepsilon} \nabla \vec v^\varepsilon \colon \nabla  \vec  \varphi - \int_{\Omega^\varepsilon} p^\varepsilon \operatorname{div} \vec \varphi = 0, \qquad \forall \vec\varphi \in V_\text{per}(\Omega^\varepsilon).
\end{equation}

The first step is to approximate the pore-scale solution in the free-flow region $\Omega_\FF$ and to eliminate the boundary conditions for $\vec v^\varepsilon$ at the upper boundary $\{x_2=h\}$. Therefore, the Stokes problem~\eqref{eq:macro-ff} is used in $\Omega_\FF$ and the variational formulation is 
\begin{align}\label{eq:weak-ff}
    \int_{\Omega_\FF} \nabla \vec v^\FF \colon \nabla  \vec  \varphi - \int_{\Omega_\FF} p^\FF \operatorname{div} \vec \varphi 
    &= -\int_{\Sigma} \underbrace{\left(
    \frac{\partial}{\partial x_2} 
    \vec v^\FF - \begin{bmatrix} 0 \\ p^\FF  \end{bmatrix}\right)}_{=\colon \vec F  } \vdot \vec \varphi.
\end{align}
Next, we introduce 
$\vec{w}^{j, \varepsilon}(\vec{x}) \!= \!\vec{w}^{j}(\vec{y})\!=\! \vec{w}^{j}(\vec{x}/\varepsilon), \ \pi^{j, \varepsilon}(\vec{x}) \!=\! \pi^{j}(\vec{x}/\varepsilon)$
for $\vec x \in \Omega^\varepsilon_\PM$, where $\{\vec w^j, \ \pi^j\}$ is given by~\cref{eq:cell-prob}, and extend $\vec{w}^{j, \varepsilon}$ by zero in $\Omega_\PM \setminus \Omega^\varepsilon_\PM$. 
Taking into account \cref{eq:cell-prob-momentum} and approximations  \eqref{eq:asym-exp}, \eqref{eq:Darcy-homog} of the pore-scale solutions in $\Omega_\PM^\varepsilon$, we obtain
\begin{align}\label{eq:weak-pm}
    \int_{\Omega_\PM^\varepsilon}& \nabla \left( -\varepsilon^2 \sum_{j=1}^2 \vec{w}^{j, \varepsilon} \frac{\partial p^\PM }{\partial x_j} \right) \colon \nabla  \vec  \varphi 
    - \int_{\Omega_\PM^\varepsilon} \left( p^\PM - \varepsilon \sum_{j=1}^2 \pi^{j,\varepsilon} \frac{\partial p^\PM}{\partial x_j} \right) \operatorname{div} \vec \varphi  \notag
    \\
    =& -  \!  \!  \int_{\Sigma}  \varepsilon \sum_{j=1}^2 \underbrace{\left( ( \varepsilon\nabla \vec{w}^{j, \varepsilon}  \!  -  \pi^{j,\varepsilon} \ten I) \frac{\partial p^\PM }{\partial x_j}  \vec e_2\right)}_{=\colon \vec B^j_\varepsilon} \vdot  \vec  \varphi 
    - \!  \!  \int_{\Sigma}   \varepsilon^2 \sum_{j=1}^2 \left( \left( \vec{w}^{j, \varepsilon}  \!  \otimes \nabla \frac{\partial p^\PM }{\partial x_j} \right)  \!  \vec e_2  \right) \vdot \vec  \varphi  \notag
    \\[-0.5ex]
    &-  \! \int_{\Sigma}p^\PM  \vec e_2 \vdot \vec  \varphi +\int_{\{x_2 = -H\}}  \varepsilon^2\sum_{j=1}^2  \left(\nabla \vec{w}^{j, \varepsilon} \frac{\partial p^\PM }{\partial x_j} \vec e_2 
    + \left( \vec{w}^{j, \varepsilon} \otimes  \nabla \frac{\partial p^\PM }{\partial x_j} \right) \vec e_2  \right) \vdot \vec  \varphi  \notag
    \\
    &+ \! \int_{\Omega_\PM^\varepsilon}   \sum_{j=1}^2  \bigg( \underbrace{\varepsilon^2\vec{w}^{j, \varepsilon} \Delta \frac{\partial p^\PM }{\partial x_j} + 2\varepsilon^2 \nabla \vec{w}^{j, \varepsilon} \nabla \frac{\partial p^\PM }{\partial x_j}  - \varepsilon \pi^{j,\varepsilon} \nabla \frac{\partial p^\PM}{\partial x_j}}_{=\colon - \vec A_\varepsilon^j} \bigg) \vdot \vec  \varphi\, \text{.}
\end{align}
To prove~\cref{cor:1}, we will need the following estimates for the terms in~\cref{eq:weak-pm}:
\vspace{-0.5ex}
\begin{align}
    \bigg| \int_{\Sigma} \varepsilon^2 \sum_{j=1}^2 \left(  \left( \vec{w}^{j, \varepsilon} \otimes \nabla \frac{\partial p^\PM }{\partial x_j} \right) \vec e_2 \right) \vdot \vec  \varphi \bigg| \leq& C \varepsilon^{5/2} \norm{\nabla \vec \varphi}_{L^2(\Omega_\PM^\varepsilon)^4},
    \label{eq:estimates-first-approx-1}
    \\[-1ex]
    \bigg| \int_{\{x_2 = -H\}}   \varepsilon^2 \sum_{j=1}^2   \left( \left( \vec{w}^{j, \varepsilon} \otimes \nabla \frac{\partial p^\PM }{\partial x_j} \right) \vec e_2 \right) \vdot \vec  \varphi \bigg| \leq&  C \varepsilon^{2} \norm{\nabla \vec \varphi}_{L^2(\Omega_\PM^\varepsilon)^4},
    \label{eq:estimates-first-approx-2}
    \\[-1ex]
    \bigg| \int_{\Omega_\PM^\varepsilon}   \sum_{j=1}^2 \vec A_\varepsilon^j \vdot \vec  \varphi \bigg| \leq& C \varepsilon^{2} \norm{\nabla \vec \varphi}_{L^2(\Omega_\PM^\varepsilon)^4}.
    \label{eq:estimates-first-approx-3}
\end{align}
We obtain these estimates using the Poincar\'{e} inequality and the results from \cite[Lemma 4.10]{Espedal_98}: 
For $\varphi \in H^1(\Omega^\varepsilon_\PM)$ with $\varphi= 0$ on $\partial \Omega^\varepsilon_\PM \setminus \partial \Omega_\PM$ we have
\begin{align}\label{eq:trace}
    \norm{ \varphi}_{L^2(\Omega_\PM^\varepsilon)} \leq & \ C \varepsilon \norm{\nabla \varphi}_{L^2(\Omega_\PM^\varepsilon)^2}, 
    \quad
    \norm{ \varphi}_{L^2(\Sigma)} \leq \ C \varepsilon^{1/2} \norm{\nabla \varphi}_{L^2(\Omega_\PM^\varepsilon)^2}.
\end{align}

Analyzing~\cref{eq:weak-pore-scale,eq:weak-ff,eq:weak-pm}, we get the first approximations of the pore-scale velocity $\vec v^\varepsilon$ and pressure $p^\varepsilon$:
\begin{subequations}\label{eq:approx-0}
\begin{align}
&\vec{v}^{0,\varepsilon}_\text{approx} = \mathcal{H}(x_2)\vec{v}^\FF + \mathcal{H}(-x_2) \  \bigg( - \varepsilon^2 \sum_{j=1}^2 \vec{w}^{j, \varepsilon} \frac{\partial p^\PM }{\partial x_j} \bigg),  \hspace{5mm}
\\[-0.5ex]
&p^{0,\varepsilon}_\text{approx} = \mathcal{H}(x_2)p^\FF + \mathcal{H}(-x_2) \bigg( p^\PM - \varepsilon \sum_{j=1}^2 \pi^{j,\varepsilon} \frac{\partial p^\PM}{\partial x_j} \bigg),
\end{align}
\end{subequations}
where $\mathcal{H}$ is the Heaviside function.

\subsubsection{Next order velocity approximation in the free flow}\label{sec:continue-asym-exp}

At this stage, we have $\vec v^{0,\varepsilon}_\text{approx} = \vec v^\FF \text{ in } \Omega_\FF$. It is shown in~\cite{Jaeger_Mikelic_96,Jaeger_etal_01} that this $\mathcal{O}(\varepsilon)$ approximation of the pore-scale velocity $\vec v^\varepsilon$ in the free-flow region is not sufficient for many coupled flow problems. 
Therefore, we  continue with the asymptotic expansions and consider the boundary layer problem \eqref{eq:BL-t} formulated in~\cite{Jaeger_Mikelic_96,Jaeger_etal_01}. This problem is defined on an infinite stripe $Z^{bl} = Z^{+} \cup S \cup Z^{-}$, where $Z^{+} = (0,1) \times (0,\infty)$, $S=(0,1) \times \{0\}$ and $Z^{-} = \cup_{k=1}^{\infty} (Y_\text{f} - (0,k))$. 
Here, $Y_\text{f} - (0,k)$ denotes the translation of the fluid part $Y_\text{f}$ in the negative $y_2$-direction for $k \in \mathbb{N}$. Thus, $Z^{bl}$ is the flow region within the stripe (\cref{fig:stripe}). We denote $\llbracket a\rrbracket _S := a(\cdot, +0)-a(\cdot,-0)$.

The boundary layer problem corresponding to the next order approximations is defined as follows
\begin{subequations}
\begin{align}
-\Delta_{\vec y} \vec t^{bl} + \nabla_{\vec y} s^{bl} &= \vec0 \quad \text{in } Z^{+}  \cup Z^{-}, \label{eq:BL-t-1}
\\
\operatorname{div}_{\vec y} \vec t^{bl} &= 0 \quad \text{in } Z^{+}  \cup Z^{-},  \label{eq:BL-t-2}
\\ 
\big\llbracket \vec t^{bl} \big\rrbracket_S &= \vec 0 \quad \text{on } S, \label{eq:BL-t-3}
\\
\big\llbracket ( \nabla_{\vec y} \vec t^{bl} - s^{bl} \ten{I} ) \vec{e}_2\big\rrbracket_S &=  \vec{e}_1 \hspace{1.5ex} \text{on } S, \label{eq:BL-t-4}
\\
\vec t^{bl} = \vec 0 \quad \text{on } \cup_{k=1}^{\infty}(\partial Y_s - (0,k)),& \quad \{\vec t^{bl}, s^{bl}\} \text{ is 1-periodic in $y_1$}. \label{eq:BL-t-5}
\end{align} \label{eq:BL-t}
\end{subequations}
Existence and uniqueness of $\vec t^{bl} \in L_{loc}^2(Z^{bl})^2$, $\nabla_{\vec y} \vec t^{bl} \in L^2(Z^{+} \cup Z^{-})^4$ and uniqueness up to a constant of $s^{bl} \in L_{loc}^2(Z^{bl})$ that satisfy \eqref{eq:BL-t} follows from the Lax--Milgram lemma~\cite[Proposition 3.22]{Jaeger_Mikelic_96}. After~\cite{Jaeger_Mikelic_96}, solutions $\vec t^{bl}$ and $s^{bl}$ stabilize exponentially towards boundary layer constants for $|y_2| \rightarrow \infty$. In order to define the 
boundary layer 
pressure $s^{bl}$ uniquely, we set
$\lim\limits_{\vec y \rightarrow -\infty}{s^{bl}(\vec y)} = 0$. 

As shown in~\cite{Jaeger_Mikelic_96}, there exist $\gamma \in(0, 1)$, $\vec{N}^{bl}$ and $N_{s}^{bl}$ such that
\begin{subequations}
\begin{align}
& e^{\gamma |y_2|}\nabla_\vec{y} \vec t^{bl} \in L^2(Z^+ \cup Z^-)^4, \ e^{\gamma |y_2|} \vec t^{bl} \in L^2(Z^-)^2, 
\ e^{\gamma |y_2|} s^{bl} \in L^2(Z^-) , \label{eq:BL-t-exponentialDecay}
\\
& \vec N^{bl} = (N_1^{bl}, 0) = \bigg(\int_S  t_1^{bl}(y_1, +0) \ \text{d} y_1, 0\bigg),  \label{eq:BL-t-velocity-constant}
\\
& N_{s}^{bl} = \int_0^1 s^{bl}(y_1, +0) \ \text{d} y_1, \label{eq:BL-t-pressure-constant}
\\
& |\vec t^{bl} - \vec{N}^{bl} | + |s^{bl} - N_s^{bl}| \leq Ce^{-\gamma y_2}, \quad y_2 > 0.
\end{align}
\end{subequations}
We introduce $\vec t^{bl,\varepsilon} (\vec x)= \vec t^{bl} (\vec y)$, $s^{bl,\varepsilon} (\vec x)= s^{bl} (\vec y)$ for $\vec x \in \Omega^\varepsilon$ and extend the boundary layer velocity $\vec t^{bl,\varepsilon}$ by zero in $\Omega \setminus \Omega^\varepsilon$. The following inequalities~\cite{Jaeger_Mikelic_96} hold
\begin{subequations}
\begin{align}
    \norm{\vec t^{bl,\varepsilon} - \mathcal{H}(x_2) \vec N^{bl} }_{L^2(\Omega)^2} &\leq C \varepsilon^{1/2}, \quad
    \norm{\nabla \vec t^{bl,\varepsilon}}_{L^2(\Omega_\FF \cup \Omega_\PM)^4} \leq C \varepsilon^{-1/2},
    \label{eq:BL-t-velocity-constant-estimates}
    \\
    \norm{s^{bl,\varepsilon} - \mathcal{H}(x_2) N_s^{bl} }_{L^2(\Omega^\varepsilon)} &\leq C \varepsilon^{1/2}. \label{eq:BL-t-pressure-constant-estimates}
\end{align}\label{eq:BL-t-properties} 
\end{subequations}

Taking into account the boundary layer velocity $\vec t^{bl,\varepsilon}$ and pressure $s^{bl,\varepsilon}$ for the next velocity and pressure approximations, we define the new error functions 
\begin{subequations}\label{eq:error-1}
\begin{align*}
    \vec U^{1,\varepsilon} 
    &= \vec U^{0,\varepsilon} \!+ \varepsilon \left(\vec t^{bl,\varepsilon} - \mathcal{H}(x_2)\vec N^{bl} \right) \frac{\partial v_1^\FF}{\partial x_2}\bigg|_\Sigma, 
    \\
    P^{1,\varepsilon} 
    &=  P^{0,\varepsilon} +  \left(s^{bl,\varepsilon} \!- \mathcal{H}(x_2) N_s^{bl} \right) \frac{\partial v_1^\FF}{\partial x_2}\bigg|_\Sigma .
\end{align*}
\end{subequations}
The variational formulation for $\vec U^{0,\varepsilon}, \ P^{0,\varepsilon} $ is obtained by combining~\cref{eq:weak-pore-scale,eq:weak-ff,eq:weak-pm}.
In order to derive the weak form corresponding to the new error functions $\vec U^{1,\varepsilon}, \ P^{1,\varepsilon}$, we focus here only on 
the newly added terms
\begin{align}\label{eq:weak-BL-t}
    \int_{\Omega^\varepsilon} \nabla &\left(  \varepsilon
    \left( \vec t^{bl,\varepsilon} - \mathcal{H}(x_2) \vec N^{bl} \right) \frac{\partial v_1^\FF}{\partial x_2}\bigg|_\Sigma \right) \colon \nabla \vec \varphi 
    + \int_{\Sigma}  N_s^{bl}  \frac{\partial v_1^\FF}{\partial x_2} \bigg|_\Sigma \vec e_2 \vdot \vec \varphi
    \notag
    \\
    &- \int_{\Omega^\varepsilon} \left(s^{bl,\varepsilon} - \mathcal{H}(x_2) N_s^{bl} \right) \frac{\partial v_1^\FF}{\partial x_2}\bigg|_\Sigma \operatorname{div} \vec \varphi
    \ +\int_{\Sigma}  \frac{\partial v_1^\FF}{\partial x_2}\bigg|_\Sigma \vec e_1 \vdot \vec \varphi 
    \notag
    \\
    =& \int_{\Omega^\varepsilon} \bigg(
    \underbrace{  \varepsilon  \left(\vec t^{bl,\varepsilon} - \mathcal{H}(x_2) \vec N^{bl} \right) \frac{\partial^2}{\partial x_1^2}\frac{\partial v_1^\FF}{\partial x_2}\bigg|_\Sigma}_{= \colon \vec A_\varepsilon^{11}} 
    +  \underbrace{ 
    \left(s^{bl,\varepsilon} - \mathcal{H}(x_2) N_s^{bl} \right) \nabla \frac{\partial v_1^\FF}{\partial x_2}\bigg|_\Sigma}_{= \colon \vec A_\varepsilon^{31}} \bigg) \vdot \vec \varphi \notag
    \\
    &+ 2\int_{\Omega^\varepsilon} \underbrace{ \varepsilon  \left( 
    \left( \vec t^{bl,\varepsilon} - \mathcal{H}(x_2)\vec N^{bl} \right) \otimes \frac{\partial}{\partial x_1}\frac{\partial v_1^\FF}{\partial x_2}\bigg|_\Sigma \vec e_1  \right) 
    }_{= \colon \ten A_\varepsilon^{21}} \colon \nabla \vec \varphi  + \text{ e.s.t.}
\end{align} 
Exponentially small terms are denoted by 'e.s.t.' throughout the manuscript.
The exponentially small terms in~\cref{eq:weak-BL-t} include the boundary layer corrector $\vec t^{bl,\varepsilon}$ and its gradient $\nabla \vec t^{bl,\varepsilon}$ appearing in the integral over~$\{x_2=-H\}$. 
The integral terms over the interface $\Sigma$ on the left hand side of~\cref{eq:weak-BL-t} will either vanish due to the derived interface conditions or cancel out with the corresponding parts of $\int_{\Sigma} \vec F  \vdot \vec \varphi$ from~\cref{eq:weak-ff}.
The following identity  
\begin{align}
    \diver \left( \vec t^{bl,\varepsilon} \otimes \nabla \frac{\partial v_1^\FF}{\partial x_2}\bigg|_\Sigma \right) = \vec t^{bl,\varepsilon}\Delta  \frac{\partial v_1^\FF}{\partial x_2}\bigg|_\Sigma + \nabla \vec t^{bl,\varepsilon} \nabla  \frac{\partial v_1^\FF}{\partial x_2}\bigg|_\Sigma \label{eq:trick}
\end{align}
is applied to obtain a higher estimation order for the terms on the right hand side of~\cref{eq:weak-BL-t} and is used later for similar calculations.
Taking into account the Poincar\'{e} inequality and estimates~\eqref{eq:trace} and \eqref{eq:BL-t-properties}, we obtain the following energy estimates
\begin{align}
    \bigg| \int_{\Omega_\FF} 
    \vec A_\varepsilon^{11}  \vdot \vec \varphi \bigg| &\leq C \varepsilon^{3/2} \norm{\vec \varphi}_{L^2(\Omega_\FF)^2},
    \quad
    \bigg| \int_{\Omega_\PM^\varepsilon} 
    \vec A_\varepsilon^{11}  \vdot \vec \varphi \bigg| 
    \leq C \varepsilon^{5/2} \norm{\nabla \vec \varphi}_{L^2(\Omega_\PM^\varepsilon)^4}, \label{eq:estimates-BL-t-1-2}
    \\
    \bigg| \int_{\Omega^\varepsilon} \ten A_\varepsilon^{21} \colon \nabla \vec \varphi \bigg| &\leq  C \varepsilon^{3/2} \norm{\nabla \vec \varphi}_{L^2(\Omega^\varepsilon)^4} , 
    \quad
    \bigg|  \int_{\Omega_\PM^\varepsilon} \vec A_\varepsilon^{31}  \vdot \vec \varphi \bigg| 
    \leq C \varepsilon^{3/2} \norm{\nabla \vec \varphi}_{L^2(\Omega_\PM^\varepsilon)^4}.  
    \label{eq:estimates-BL-t-3-4}
\end{align}
It remains to estimate $\int_{\Omega_\FF} 
\vec A_\varepsilon^{31} \vdot \vec \varphi$ on the right hand side of~\cref{eq:weak-BL-t}.  We follow the ideas from~\cite{Jaeger_Mikelic_96} and construct the auxiliary problem
\begin{align} \label{eq:V}
    \frac{\partial V}{\partial y_1} (\vec y)=s^{bl}(\vec y) - N_s^{bl}, \quad \vec y \in (0,1) \times (0,\infty),
    \quad
    V \text{ is $y_1$-periodic.}
\end{align}
From definition~\eqref{eq:BL-t-pressure-constant} of the boundary layer constant $N_s^{bl}$ it follows directly that
\begin{equation*}
    V(y_1,y_2) = \int_0^{y_1} s^{bl}(t,y_2) \ \text{d} t - N_s^{bl}y_1, \quad \vec y \in  (0,1) \times (0,\infty)
\end{equation*}
is a solution to~\cref{eq:V}.
Setting $V^{\varepsilon}(\vec x) = \varepsilon V(\vec y)$ for $\vec x \in \Omega_\FF$ and using~\cref{eq:BL-t-pressure-constant-estimates} we obtain
\vspace{-2ex}
\begin{align}
    \frac{\partial V^{\varepsilon}}{\partial x_1} = s^{bl,\varepsilon} - N_s^{bl} \quad \text{in } \Omega_\FF,
    \quad
    \norm{V^{\varepsilon}}_{L^2(\Omega_\FF)} \leq C  \varepsilon^{3/2}.
\end{align}
Taking into account the periodicity of function $V$ we get
\begin{align}\label{eq:estimate-trick}
    \bigg|  \int_{\Omega_\FF} \vec A_\varepsilon^{31}  \vdot \vec \varphi \bigg| 
    &=
    \bigg|  \int_{\Omega_\FF} 
    \left(s^{bl,\varepsilon} -  N_s^{bl} \right) \frac{\partial}{\partial x_1} \frac{\partial v_1^\FF}{\partial x_2}\bigg|_\Sigma \vec e_1 \vdot \vec  \varphi \bigg| \notag
    \\
    &=
    \bigg|  \int_{\Omega_\FF} V^\varepsilon \left( \varphi_1 \frac{\partial^2}{\partial x_1^2} \left(  \frac{\partial v_1^\FF}{\partial x_2}\bigg|_\Sigma \right) + \frac{\partial \varphi_1}{\partial x_1} \frac{\partial}{\partial x_1} \left(
    \frac{\partial v_1^\FF}{\partial x_2}\bigg|_\Sigma\right) \right) \bigg| 
    \notag
    \\[1ex]
    &\leq C \varepsilon^{3/2} \norm{\vec \varphi}_{H^1(\Omega_\FF)^2}  
    \leq C \varepsilon^{3/2} \norm{\nabla \vec \varphi}_{H^1(\Omega_\FF)^4}.
\end{align} 

Up to now, we do not have continuity of velocity trace across $\Sigma$. However, the principle of mass conservation across the interface should be fulfilled.
Hence, to obtain a physically consistent formulation we have to eliminate the trace jump on $\Sigma$.


\subsubsection{Velocity trace continuity}\label{sec:trace-continuity}

To establish the continuity of velocity trace across the interface $\Sigma$, we add boundary layer correctors which are solutions of the boundary layer problem for $j=1,2$ proposed  in~\cite{Carraro_etal_15}:
\begin{subequations}\label{eq:BL-beta}
\begin{align}
-\Delta_{\vec y} \vec \beta^{j,bl} + \nabla_{\vec y} \omega^{j,bl} &= \vec0  \quad \text{in } Z^{+}  \cup Z^{-}, \label{eq:BL-beta-1}
\\ \operatorname{div}_{\vec y} \vec \beta^{j,bl} &= 0 \quad \text{in } Z^{+}  \cup Z^{-}, \label{eq:BL-beta-2}
\\
\big\llbracket \vec \beta^{j,bl} \big\rrbracket_S &= k_{2j}\vec{e}_2 - \vec w^j \quad \text{on } S, \label{eq:BL-beta-3}
\\
\big\llbracket ( \nabla_{\vec y} \vec \beta^{j,bl} - \omega^{j,bl} \ten{I} ) \vec{e}_2\big\rrbracket_S &= - \left( \nabla_{\vec y} \vec w^{j} - \pi^j \right) \vec{e}_2 \quad \text{on } S, \label{eq:BL-beta-4}
\\
\vec \beta^{j,bl} = \vec 0 \quad \text{on } \cup_{k=1}^{\infty}(\partial Y_s - (0,k)),& \qquad \{\vec \beta^{j,bl}, \omega^{j,bl}\} \text{ is 1-periodic in $y_1$}. \label{eq:BL-beta-5}
\end{align}
\end{subequations}
\Cref{eq:BL-beta-3} establishes the continuity of normal velocity across the interface $\Sigma$ and \cref{eq:BL-beta-4} is necessary to remove the problematic term $\int_{\Sigma} \varepsilon \sum_{j=1}^2  \vec B^j_\varepsilon \vdot  \vec  \varphi$ in the weak formulation~\eqref{eq:weak-pm}.

Uniqueness of $\vec \beta^{j,bl} \in L_{loc}^2(Z^{bl})^2$, $\nabla_{\vec y} \vec \beta^{j,bl} \in L^2(Z^{+} \cup Z^{-})^4$ that satisfy \cref{eq:BL-beta} and uniqueness up to a constant of  $\omega^{j,bl} \in L_{loc}^2(Z^{bl})$ follows from the Lax--Milgram lemma. Using the results of~\cite{Carraro_etal_15,Jaeger_Mikelic_96} we know that~\eqref{eq:BL-beta} describes a boundary layer problem, 
therefore, velocity $\vec \beta^{j,bl}$ and pressure $\omega^{j,bl}$ stabilize exponentially towards boundary layer constants for $|y_2| \rightarrow \infty$.
Analogous to~\cref{sec:continue-asym-exp}, we set \mbox{$\lim\limits_{\vec y \rightarrow -\infty}{\omega^{j,bl}(\vec y)} = 0$}. There exist $\gamma \in (0,1)$, $\vec{M}^{j,bl}$ and  $M_{\omega}^{j,bl}$ such that
\begin{subequations}
\begin{align}
& e^{\gamma |y_2|}\nabla_\vec{y} \vec \beta^{j,bl} \! \! \in L^2(Z^+\! \cup \! Z^-)^4, e^{\gamma |y_2|} \vec \beta^{j,bl} \! \! \in L^2(Z^-)^2, \label{eq:BL-beta-exponentialDecay-1} 
e^{\gamma |y_2|} \omega^{j,bl} \in \! \! L^2(Z^-),
\\
& \vec{M}^{j,bl}= (M_1^{j,bl}, 0) = \bigg(\int_S \beta_1^{j, bl}(y_1, +0) \ \text{d} y_1, 0\bigg), \label{eq:BL-beta-exponentialDecay-2}
\\[-1ex]
& M_{\omega}^{j,bl} = \int_0^1 \omega^{j,bl}(y_1, +0) \ \text{d} y_1 ,
\\
& |\vec \beta^{j,bl} - \vec{M}^{j,bl} | + |\omega^{j,bl} - M_{\omega}^{j,bl}| \leq Ce^{-\gamma y_2}, \quad y_2 > 0.
\label{eq:BL-beta-exponentialDecay}
\end{align}
\end{subequations}

We introduce $\vec \beta^{j,bl, \varepsilon}(\vec{x}) = \vec \beta^{j,bl}(\vec{y})$,  $\omega^{j,bl, \varepsilon}(\vec{x})=\omega^{j,bl}(\vec{y})$ and extend the boundary layer velocity by setting $\vec \beta^{j,bl,\varepsilon}=\vec 0$ in $\Omega\setminus \Omega^{\varepsilon}$. Then, the following inequalities hold~\cite{Jaeger_Mikelic_96}:
\begin{subequations}\label{eq:BL-beta-estimates}
\begin{align}
    \| \vec \beta^{j,bl,\varepsilon}  -\mathcal{H}(x_2)  \vec{M}^{j,bl}  \|_{L^2(\Omega)^2} &\leq C\varepsilon^{1/2}, 
    \| \nabla \vec \beta^{j,bl,\varepsilon}  \|_{L^2(\Omega_\FF \cup \Omega_\PM)^4} \leq C\varepsilon^{-1/2}, 
    \\
    \| \omega^{j,bl,\varepsilon} -\mathcal{H}(x_2)  M_{\omega}^{j,bl}  \|_{L^2(\Omega^{\varepsilon})} &\leq C\varepsilon^{1/2}.
\end{align}
\end{subequations}

We use the boundary layer functions $\vec \beta^{j,bl,\varepsilon}, \ \omega^{j,bl,\varepsilon}$ and stabilizing constants $\vec M^{j,bl} , \ M_{\omega}^{j,bl} $ to improve the velocity and pressure approximations. This leads to the following error functions
\begin{subequations}\label{eq:error-2}
\begin{align*}
    \vec U^{2,\varepsilon} =
    &\vec U^{1,\varepsilon} -\varepsilon^2 \sum_{j=1}^2 \left( \vec{\beta}^{j,bl, \varepsilon} - \mathcal{H}(x_2) \vec M^{j,bl} \right) \frac{\partial p^\PM }{\partial x_j}\bigg|_\Sigma ,
    \\[-0.75ex]
    P^{2,\varepsilon} = 
    & P^{1,\varepsilon} - \varepsilon \sum_{j=1}^2 \left( \omega^{j,bl, \varepsilon} - \mathcal{H}(x_2)M_\omega^{j,bl} \right) \frac{\partial p^\PM }{\partial x_j}\bigg|_\Sigma.
\end{align*}
\end{subequations}

\begin{corollary}\label{cor:1}
The function $\vec U^{2,\varepsilon} \in H^1(\Omega^\varepsilon)^2$ and the following estimate holds
\begin{align}
\label{eq:cor:1}
    \bigg| & \int_{\Omega^\varepsilon} \nabla \vec U^{2,\varepsilon} \colon \nabla  \vec  \varphi - \int_{\Omega^\varepsilon} P^{2,\varepsilon} \operatorname{div} \vec \varphi 
    + \int\limits_{\{x_2 = -H\}}  \! \! \! \!  \varepsilon^2 \sum_{j=1}^2 \frac{\partial}{\partial x_2} {w}_1^{j, \varepsilon} \frac{\partial p^\PM }{\partial x_j} \varphi_1 \notag
    \\
    &
    - \! \int_{\Sigma} p^\PM   \varphi_2
    - \! \int_{\Sigma}  \left({
    \frac{\partial}{\partial x_2} \vec v^\FF} - \begin{bmatrix} 0 \\ p^\FF  \end{bmatrix} \right) 
    \vdot \vec \varphi 
    \ { 
    + \! \int_{\Sigma}  \frac{\partial v_1^\FF}{\partial x_2}\bigg|_\Sigma \vec e_1 \vdot \vec \varphi } \notag
    + \! \int_{\Sigma}  N_s^{bl}  \frac{\partial v_1^\FF}{\partial x_2} \bigg|_\Sigma \vec e_2 \vdot \vec \varphi
    \bigg|
    \\
    & \ \leq \ C\varepsilon^{3/2} \left(\norm{\nabla \vec \varphi}_{L^2(\Omega^\varepsilon)^4}
    +  \norm{\vec \varphi}_{H^1(\Omega_\FF)^2}\right), \qquad \forall \vec\varphi \in V_\text{per}(\Omega^\varepsilon).
\end{align}
\end{corollary}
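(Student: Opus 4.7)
The plan is to assemble the variational identity for $(\vec U^{2,\varepsilon}, P^{2,\varepsilon})$ by subtracting successive approximations from \eqref{eq:weak-pore-scale}, exploiting the specific boundary-layer equations that were set up precisely so as to kill the problematic interface integrals, and then bounding the remainder using the estimates already collected.

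First I would verify that $\vec U^{2,\varepsilon} \in H^1(\Omega^\varepsilon)^2$. The only possible loss of regularity sits on $\Sigma$ because each of the building blocks $\vec v^{0,\varepsilon}_\text{approx}$, $\varepsilon(\vec t^{bl,\varepsilon} - \mathcal{H}(x_2)\vec N^{bl})\partial_{x_2} v_1^\FF|_\Sigma$ and $\varepsilon^2\sum_j (\vec\beta^{j,bl,\varepsilon} - \mathcal{H}(x_2)\vec M^{j,bl})\partial_j p^\PM|_\Sigma$ is discontinuous across $\Sigma$ on its own. The pore-scale velocity $\vec v^\varepsilon$ is continuous, so I only need the approximation to be continuous. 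The jump of $\vec v^{0,\varepsilon}_\text{approx}$ on $\Sigma$ equals $\vec v^\FF + \varepsilon^2\sum_j \vec w^{j,\varepsilon} \partial_j p^\PM$; the $\vec t^{bl,\varepsilon}$-term contributes the jump $\varepsilon \vec N^{bl}\partial_{x_2}v_1^\FF|_\Sigma$; and condition \eqref{eq:BL-beta-3} on $\vec\beta^{j,bl}$ was designed to produce exactly the compensating jump $\varepsilon^2(k_{2j}\vec e_2 - \vec w^j + \vec M^{j,bl})\partial_j p^\PM|_\Sigma$. Adding these contributions and recognizing $\sum_j k_{2j}\partial_j p^\PM = -v_2^\PM$, the jump of $\vec v^{2,\varepsilon}_\text{approx}$ on $\Sigma$ collapses to the tangential terms that are absorbed into the new interface condition \eqref{eq:NEW-tangential}; the normal trace continuity is clean.

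Next I would produce the variational identity. Start from \eqref{eq:weak-pore-scale}, subtract the free-flow formulation \eqref{eq:weak-ff} and the porous-medium formulation \eqref{eq:weak-pm}, then subtract the boundary-layer contribution \eqref{eq:weak-BL-t}, then subtract the analogous contribution for the $\vec\beta^{j,bl,\varepsilon}$ correctors. This last contribution has exactly the structure of \eqref{eq:weak-BL-t} but with the momentum boundary layer data $\vec t^{bl}$, $s^{bl}$ replaced by $\vec\beta^{j,bl}$, $\omega^{j,bl}$; its key feature is that the boundary integral on $\Sigma$ generated by integration by parts picks up the jump \eqref{eq:BL-beta-4}, i.e.\ $-(\nabla_{\vec y}\vec w^j - \pi^j\ten I)\vec e_2\,\partial_j p^\PM|_\Sigma$, which is precisely $-\vec B^j_\varepsilon$ at leading order and therefore cancels the problematic $\int_\Sigma \varepsilon\sum_j \vec B^j_\varepsilon\vdot\vec\varphi$ from \eqref{eq:weak-pm}. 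The surviving boundary terms on $\Sigma$ are exactly those displayed in \eqref{eq:cor:1}: the $-p^\PM$ term, the free-flow stress $\vec F$, the $\vec e_1$-contribution coming from the jump \eqref{eq:BL-t-4}, and the $N_s^{bl}$ contribution from the $s^{bl}$-stabilisation.

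Finally I would bound the remaining right-hand side. Each volume remainder is of one of the forms already estimated: the terms $\vec A_\varepsilon^j$ from \eqref{eq:estimates-first-approx-3}, the terms $\vec A_\varepsilon^{11}, \ten A_\varepsilon^{21}, \vec A_\varepsilon^{31}$ from \eqref{eq:estimates-BL-t-1-2}--\eqref{eq:estimates-BL-t-3-4} together with the trick \eqref{eq:estimate-trick} via the auxiliary function $V^\varepsilon$, and their analogues for $(\vec\beta^{j,bl,\varepsilon},\omega^{j,bl,\varepsilon})$ which go through in the same manner thanks to the decay/norm estimates \eqref{eq:BL-beta-estimates}. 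The bottom-boundary integral $\int_{\{x_2=-H\}} \varepsilon^2\sum_j (\vec w^{j,\varepsilon}\otimes\nabla\partial_j p^\PM)\vec e_2 \vdot \vec\varphi$ from \eqref{eq:estimates-first-approx-2} is only $O(\varepsilon^2)$, but its leading piece $\varepsilon^2\sum_j \partial_{x_2}w_1^{j,\varepsilon}\partial_j p^\PM \varphi_1$ is precisely the term that appears explicitly on the left-hand side of \eqref{eq:cor:1}; the rest is $O(\varepsilon^{5/2})$. Collecting all bounds yields the claimed $C\varepsilon^{3/2}(\|\nabla\vec\varphi\|_{L^2(\Omega^\varepsilon)^4} + \|\vec\varphi\|_{H^1(\Omega_\FF)^2})$.

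The main obstacle is not any single estimate but the bookkeeping: tracking which interface and bottom-boundary integrals cancel which, making sure the $\vec\beta^{j,bl}$-integration by parts produces exactly $-\vec B^j_\varepsilon$ on $\Sigma$ using \eqref{eq:BL-beta-4}, and verifying that no unaccounted $O(\varepsilon)$ term survives. Once the cancellations are done correctly, every residual is estimated by Poincaré, the trace inequalities \eqref{eq:trace}, the decay estimates \eqref{eq:BL-t-properties}, \eqref{eq:BL-beta-estimates}, and the auxiliary-function trick \eqref{eq:estimate-trick}.
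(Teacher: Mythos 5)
Your proposal follows essentially the same route as the paper's proof: assemble the variational identity for $(\vec U^{2,\varepsilon},P^{2,\varepsilon})$ from \eqref{eq:weak-pore-scale}, \eqref{eq:weak-ff}, \eqref{eq:weak-pm}, \eqref{eq:weak-BL-t} and the analogous $\vec\beta^{j,bl,\varepsilon}$ contribution, use the jump condition \eqref{eq:BL-beta-4} to cancel $\int_\Sigma\varepsilon\sum_j\vec B^j_\varepsilon\vdot\vec\varphi$, and bound the residuals via Poincar\'e, \eqref{eq:trace}, \eqref{eq:BL-t-properties}, \eqref{eq:BL-beta-estimates} and the auxiliary-function trick \eqref{eq:estimate-trick}. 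The one cosmetic slip is in the bottom-boundary bookkeeping: the explicit term on $\{x_2=-H\}$ in \eqref{eq:cor:1} arises from the summand $\nabla\vec w^{j,\varepsilon}\,\frac{\partial p^\PM}{\partial x_j}\vec e_2$ in \eqref{eq:weak-pm} (only $\varphi_1$ survives since $\varphi_2=0$ there), not as the leading piece of the $\vec w^{j,\varepsilon}\otimes\nabla\frac{\partial p^\PM}{\partial x_j}$ summand that \eqref{eq:estimates-first-approx-2} estimates.
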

\begin{proof}
By construction of approximation $\vec v^{2,\varepsilon}_\text{approx}$ we have $\vec U^{2,\varepsilon} \in H^1(\Omega^\varepsilon)^2$. Taking into account the fact that the boundary layer velocities $\vec t^{bl,\varepsilon}$, $\vec \beta^{j,bl,\varepsilon}$, their gradients and the pressures $s^{bl,\varepsilon}$, $\omega^{j,bl,\varepsilon}$ stabilize exponentially to zero for $x_2 \rightarrow -\infty$, we get
\begin{align}
    \int_{\Omega^\varepsilon}& \nabla \vec U^{2,\varepsilon} \colon \nabla  \vec  \varphi - \int_{\Omega^\varepsilon} P^{2,\varepsilon} \operatorname{div} \vec \varphi + \int\limits_{\{x_2 = -H\}}  \! \! \! \! \varepsilon^2 \sum_{j=1}^2 \frac{\partial}{\partial x_2} {w}_1^{j, \varepsilon} \frac{\partial p^\PM }{\partial x_j}  \varphi_1 \notag
    \\[-0.5ex] 
    & - \int_{\Sigma} p^\PM    \varphi_2 - \int_{\Sigma} \vec F \vdot  \vec  \varphi
    \ {
    +\int_{\Sigma}  \frac{\partial v_1^\FF}{\partial x_2}\bigg|_\Sigma \vec e_1 \vdot \vec \varphi }
    + \int_{\Sigma}  N_s^{bl}  \frac{\partial v_1^\FF}{\partial x_2} \bigg|_\Sigma \vec e_2 \vdot \vec \varphi \notag
     \\[-0.5ex]
    =&
    \int_{\Sigma}  \varepsilon^2 \sum_{j=1}^2 \left( \left(  \vec{w}^{j, \varepsilon} \! \otimes \nabla \frac{\partial p^\PM }{\partial x_j} \right) \vec e_2 \right) \vdot \vec  \varphi
    - \! \! \! \! \! \! \int\limits_{\{x_2 = -H\}}  \! \! \! \! \! \! \varepsilon^2 \sum_{j=1}^2  \left( \left(\vec{w}^{j, \varepsilon} \! \otimes \nabla \frac{\partial p^\PM }{\partial x_j} \right) \vec e_2 \right) \vdot \vec  \varphi
    \notag
    \\[-0.5ex]
    & + \int_{\Omega_\PM^\varepsilon}   \sum_{j=1}^2 \vec A_\varepsilon^j \vdot \vec  \varphi
    +  \int_{\Omega^\varepsilon_\PM} 
    \vec A_\varepsilon^{11} \vdot \vec \varphi
     +  \int_{\Omega_\FF} 
    \vec A_\varepsilon^{11} \vdot \vec \varphi
    + 2 \int_{\Omega^\varepsilon} \ten A_\varepsilon^{21} \colon \nabla \vec \varphi \notag
    \\[-0.5ex]
    &+ \int_{\Omega^\varepsilon_\PM} \vec A_\varepsilon^{31} \vdot \vec \varphi
    + \int_{\Omega_\FF} \vec A_\varepsilon^{31} \vdot \vec \varphi + \int_\Sigma \varepsilon \sum_{j=1}^2  M_\omega^{j,bl} \frac{\partial p^\PM}{\partial x_j}\bigg|_\Sigma \varphi_2 \notag
    \\[-1ex]
    &- 2 \int_{\Omega^\varepsilon} \sum_{j=1}^2  
    \underbrace{\varepsilon^2 \left(\left( \vec \beta^{j,bl,\varepsilon} - \mathcal{H}(x_2) \vec M^{j,bl}\right)  \otimes  \frac{\partial}{\partial x_1}   \frac{\partial p^\PM}{\partial x_j}\bigg|_\Sigma  \vec e_1 \right)}_{=\colon \ten A_\varepsilon^{j,12}} \colon \nabla  \vec  \varphi
    \notag
    \\[-1ex]
    &
    - \int_{\Omega^\varepsilon} \sum_{j=1}^2 \underbrace{\varepsilon^2 \left( \vec \beta^{j,bl,\varepsilon} - \mathcal{H}(x_2)  \vec M^{j,bl} \right) \frac{\partial^2}{\partial x_1^2} \frac{\partial p^\PM}{\partial x_j}\bigg|_\Sigma}_{= \colon \vec A_\varepsilon^{j,22}} \vdot \vec \varphi 
    \notag
    \\[-1ex]
    &- \int_{\Omega^\varepsilon} \sum_{j=1}^2
    \underbrace{\varepsilon \left(\omega^{j,bl,\varepsilon} -\mathcal{H}(x_2)  M_\omega^{j,bl} \right) \frac{\partial}{\partial x_1} \frac{\partial p^\PM}{\partial x_j}\bigg|_\Sigma \vec e_1}_{= \colon\vec A_\varepsilon^{j,32}}
    \vdot \vec \varphi + \text{ e.s.t.}
    \label{eq:proof-1}
\end{align}
The exponentially small terms in~\cref{eq:proof-1} include the boundary layer velocities and their gradients appearing in integrals over the lower boundary~$\{x_2=-H\}$. 

Using the Poincar\'{e} inequality and inequalities~\eqref{eq:trace} and ~\eqref{eq:BL-beta-estimates} we can estimate the following integral terms in~\cref{eq:proof-1} as
\begin{align*}
    & \bigg|\! \int\limits_\Sigma \!
    \sum_{j=1}^2  M_\omega^{j,bl} \frac{\partial p^\PM}{\partial x_j}\bigg|_\Sigma \! \varphi_2
    \bigg| \leq C 
    \varepsilon^{1/2} 
    \norm{\nabla \vec \varphi}_{L^2(\Omega^\varepsilon)^4}, \; 
    \bigg|\!\int\limits_{\Omega^\varepsilon}\! \sum_{j=1}^2 \ten A_\varepsilon^{j,12} \!\colon \!\nabla  \vec  \varphi \bigg| \!\leq C \varepsilon^{5/2} \norm{\nabla \vec \varphi}_{L^2(\Omega^\varepsilon)^4},
    \\[-0.5ex]
    &\bigg|\int_{\Omega^\varepsilon_\PM} \sum_{j=1}^2 \vec A_\varepsilon^{j,22} \vdot
    \vec  \varphi \bigg| \leq C \varepsilon^{3} \norm{\nabla \vec \varphi}_{L^2(\Omega_\PM^\varepsilon)^4}, \;\;
    \bigg|\int_{\Omega^\varepsilon_\PM} \sum_{j=1}^2 \vec A_\varepsilon^{j,32} \vdot \vec \varphi \bigg| \leq C \varepsilon^{2} \norm{\nabla \vec \varphi}_{L^2(\Omega_\PM^\varepsilon)^4},
    \\[-0.5ex]
    &\bigg|\int_{\Omega_\FF} \sum_{j=1}^2 \vec  A_\varepsilon^{j,22} \vdot \vec \varphi \bigg| \leq C \varepsilon^{5/2} \norm{\nabla \vec \varphi}_{L^2(\Omega_\FF)^4}, \; \;
    \bigg|\int_{\Omega_\FF} \; \; \sum_{j=1}^2 \vec A_\varepsilon^{j,32} \vdot \vec \varphi \bigg| \leq C \varepsilon^{5/2} \norm{\vec \varphi}_{H^1(\Omega_\FF)^2}. 
\end{align*}

The last estimate is 
obtained using the same idea as for estimate~\eqref{eq:estimate-trick}. 
Taking into account~\cref{eq:estimates-first-approx-1,eq:estimates-first-approx-2,eq:estimates-first-approx-3,eq:estimates-BL-t-1-2,eq:estimates-BL-t-3-4,eq:estimate-trick,eq:BL-beta-estimates} we complete the proof.
\end{proof}

The goal is to construct a velocity approximation $\vec v^{n,\varepsilon}_\text{approx}$ such that for some $n \in \mathbb{N}_0$ the corresponding error function $\vec U^{n,\varepsilon} \in V_\text{per}(\Omega^\varepsilon)$  can be used as a test function in~\cref{eq:proof-1}. So far, $\vec U^{2,\varepsilon}$ does not fulfill the boundary conditions on $\{x_2=-H\}$. Hence, the next step is to adjust the values on the lower boundary.

\subsubsection{Velocity correction on the lower boundary}\label{sec:lower-boundary}
The velocity approximation $\vec{v}^{2,\varepsilon}_\text{approx}$ does not satisfy the boundary conditions~\eqref{eq:pore-scale-3} on the lower boundary $\{x_2=-H\}$. Thus, another boundary layer corrector is needed.
We analyze the boundary values to detect the problematic terms
\begin{subequations}
\begin{align}
 U_2^{2,\varepsilon} (x_1, -H)
 =&  \varepsilon^2 \sum_{j=1}^2 (w_2^{j}(y_1, 0) - k_{2j}) \frac{\partial p^\PM}{\partial x_j} (x_1, -H)  + \textnormal{ e.s.t.} \label{eq:value-at-lower-boundary}
 \\
 \frac{\partial U_1^{2,\varepsilon}}{\partial x_2} (x_1, -H)
 =&  \varepsilon \sum_{j=1}^2  \frac{\partial}{\partial y_2}   w_1^{j}(y_1,0) \frac{\partial p^\PM}{\partial x_j}(x_1, -H) \notag 
 \\
 & + \varepsilon^2 \sum_{j=1}^2   w_1^{j}(y_1,0)  \frac{\partial}{\partial x_2} \frac{\partial p^\PM}{\partial x_j}(x_1, -H) + \textnormal{ e.s.t.} \label{eq:lower-2}
\end{align}
\end{subequations}

\Cref{eq:value-at-lower-boundary} is obtained by adding $-\varepsilon^2 \sum_{j=1}^2 k_{2j} \partial p^\PM / \partial x_j (x_1,-H) = 0$ to the second component of the error function $\vec U^{2,\varepsilon}$. Moreover, the periodicity of the unit cell velocity $\vec w^{j}$ is used, $\vec w^j (y_1, -H \varepsilon^{-1}) = \vec w^j (y_1, 0)$.
Since the right hand sides in~\cref{eq:value-at-lower-boundary,eq:lower-2} are up to the factor $\varepsilon^2$ the same as those in~\cite[section~4.3]{Carraro_etal_15}, we can use the same boundary layer problem to correct the outer boundary effects
\begin{subequations}
\begin{align}
-\Delta_{\vec y} \vec q^{j,bl} + \nabla_{\vec y} z^{j,bl} &= \vec0 \quad \text{in } Z^{-}, \label{eq:BL-q-1}
\\
\operatorname{div}_{\vec y} \vec q^{j,bl} &= 0 \quad \text{in } Z^{-},  \label{eq:BL-q-2}
\\ 
q_2^{j,bl}&= k_{2j} - w_2^j \hspace{3ex} \text{on } S, \label{eq:BL-q-3}
\\
\frac{\partial q_1^{j,bl}}{\partial y_2}  &= - \frac{\partial w_1^{j}}{\partial y_2} \hspace{5.5ex} \text{on } S, \label{eq:BL-q-4}
\\
\vec q^{j,bl} = \vec 0 \  \text{ on } \cup_{k=1}^{\infty}(\partial Y_s - (0,k))&, \quad \{\vec q^{j,bl}, z^{j,bl}\} \text{ is 1-periodic in $y_1$}. \label{eq:BL-q-5}
\end{align}\label{eq:BL-q}
\end{subequations}
After~\cite{Carraro_etal_15,Jaeger_Mikelic_96} there exists a unique solution $\vec q^{j,bl} \in H^1(Z^-)^2$, smooth in $Z^-$ and a constant $\gamma \in(0,1)$ such that $e^{\gamma |y_2|}\vec q^{j,bl} \in L^2(Z^-)^2$. The pressure $z^{j,bl}$ is unique up to a constant. The determination of the constant yields $e^{\gamma |y_2|} z^{j,bl} \in L^2(Z^-)$. 

Taking into account the boundary layer problem~\eqref{eq:BL-q} the error functions read
\begin{subequations}\label{eq:error-3}
\begin{align*}
 \vec U^{3,\varepsilon} &= \vec U^{2,\varepsilon} + \varepsilon^2 \sum_{j=1}^2  \frac{\partial p^\PM }{\partial x_j}(x_1, -H) \vec q^{j,bl}\left(\frac{x_1}{\varepsilon}, -\frac{x_2+H}{\varepsilon} \right), 
 \\
 P^{3,\varepsilon} &=  P^{2,\varepsilon} + \varepsilon \sum_{j=1}^2 \frac{\partial p^\PM }{\partial x_j}(x_1, -H) z^{j,bl}\left(\frac{x_1}{\varepsilon}, -\frac{x_2+H}{\varepsilon} \right).
\end{align*}
\end{subequations}

\begin{corollary}\label{cor:2}
The error function $\vec U^{3,\varepsilon} \in V_\text{per}(\Omega^\varepsilon)$ and it holds
\begin{align}
    & \bigg|  \int_{\Omega^\varepsilon} \nabla \vec U^{3,\varepsilon} \colon \nabla  \vec  \varphi - \int_{\Omega^\varepsilon} P^{3,\varepsilon} \operatorname{div} \vec \varphi 
    - \int_{\Sigma} p^\PM  \vec e_2 \vdot \vec  \varphi
    -\int_{\Sigma} \left(
    \frac{\partial v^\FF_2}{\partial x_2} - p^\FF  \right) \varphi_2
    \notag
    \\
    &+ \int_{\Sigma}  N_s^{bl}  \frac{\partial v_1^\FF}{\partial x_2} \bigg|_\Sigma \vec e_2 \vdot \vec \varphi \bigg|
    \leq C\varepsilon^{3/2} \left( \norm{\nabla \vec \varphi}_{L^2(\Omega^\varepsilon)^4}
    + \norm{\vec \varphi}_{H^1(\Omega_\FF)^2} \right), \; \forall \vec\varphi \in V_\text{per}(\Omega^\varepsilon).
    \label{eq:cor:2}
\end{align}
\end{corollary}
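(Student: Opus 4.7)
The plan is to mirror the structure of the proof of \cref{cor:1}: I would take the variational identity \eqref{eq:cor:1} already established for $(\vec U^{2,\varepsilon}, P^{2,\varepsilon})$ and quantify the additional contribution of the lower-boundary corrector $\varepsilon^{2}\sum_{j}(\partial p^\PM/\partial x_j)(x_1,-H)\,\vec q^{j,bl}(x_1/\varepsilon,-(x_2+H)/\varepsilon)$ (and the analogous shifted/rescaled $z^{j,bl}$ piece for the pressure). First I would verify admissibility $\vec U^{3,\varepsilon}\in V_\text{per}(\Omega^\varepsilon)$: no-slip on the solid inclusions and $x_1$-periodicity are inherited from \eqref{eq:BL-q-5}; at the top $\{x_2=h\}$ the shifted corrector is of exponentially small order $\mathcal{O}(e^{-\gamma(h+H)/\varepsilon})$ thanks to the weighted decay $e^{\gamma|y_2|}\vec q^{j,bl}\in L^2(Z^-)^2$; and at the lower boundary $\{x_2=-H\}$ the Dirichlet jump condition \eqref{eq:BL-q-3} was designed precisely to annihilate the $\mathcal{O}(\varepsilon^{2})$ defect \eqref{eq:value-at-lower-boundary} of the second component of $\vec U^{2,\varepsilon}$.

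Next, substituting $(\vec U^{3,\varepsilon}, P^{3,\varepsilon})$ for $(\vec U^{2,\varepsilon}, P^{2,\varepsilon})$ in \eqref{eq:cor:1} adds to the left-hand side the quantity $\int_{\Omega^\varepsilon}\nabla(\vec U^{3,\varepsilon}-\vec U^{2,\varepsilon})\colon\nabla\vec\varphi-\int_{\Omega^\varepsilon}(P^{3,\varepsilon}-P^{2,\varepsilon})\operatorname{div}\vec\varphi$. Integration by parts in $\Omega_\PM^\varepsilon$, the Stokes system \eqref{eq:BL-q-1}--\eqref{eq:BL-q-2} satisfied by $(\vec q^{j,bl},z^{j,bl})$ in $Z^-$, and proper rescaling reduce the principal $\mathcal{O}(\varepsilon)$ contribution to a boundary integral on $\{x_2=-H\}$. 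The normal-derivative jump \eqref{eq:BL-q-4} makes its $\varphi_1$-part exactly cancel the $+\int_{\{x_2=-H\}}\varepsilon^{2}\sum_{j}(\partial w_1^{j,\varepsilon}/\partial x_2)(\partial p^\PM/\partial x_j)\varphi_1$ term already present in \eqref{eq:cor:1}; its $\varphi_2$-part vanishes since $\varphi_2=0$ on $\{x_2=-H\}$ by definition of $V_\text{per}(\Omega^\varepsilon)$. The companion trace on $\Sigma$ is exponentially small and absorbs into e.s.t. Separately, the algebraic identity $-\int_\Sigma\vec F\vdot\vec\varphi+\int_\Sigma(\partial v_1^\FF/\partial x_2)\vec e_1\vdot\vec\varphi=-\int_\Sigma(\partial v_2^\FF/\partial x_2-p^\FF)\varphi_2$ collapses the surviving interface terms from \eqref{eq:cor:1} into the single $\varphi_2$-contribution displayed in \eqref{eq:cor:2}.

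The main obstacle will be controlling the residual volume terms that arise when $x_1$-derivatives fall on the macroscale factor $(\partial p^\PM/\partial x_j)(x_1,-H)$ through the product rule, and the cross term pairing the pressure corrector with $\operatorname{div}\vec\varphi$. Structurally they mirror the families $\vec A_\varepsilon^{11}$, $\ten A_\varepsilon^{21}$, $\vec A_\varepsilon^{31}$ introduced in \cref{sec:continue-asym-exp}, but are now anchored at $\{x_2=-H\}$. I would estimate them by Cauchy--Schwarz using the uniform bounds $\norm{\vec q^{j,bl,\varepsilon}}_{L^2(\Omega_\PM^\varepsilon)^2}\le C\varepsilon^{1/2}$ and $\norm{\nabla\vec q^{j,bl,\varepsilon}}_{L^2(\Omega_\PM^\varepsilon)^4}\le C\varepsilon^{-1/2}$, both consequences of the weighted decay of $\vec q^{j,bl}$, combined with the scaled Poincar\'e/trace inequalities \eqref{eq:trace}. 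The delicate point is the pressure residual on $\Omega_\FF$: a direct Cauchy--Schwarz loses an $\varepsilon^{1/2}$ factor there, so I would reuse the antiderivative trick \eqref{eq:V}--\eqref{eq:estimate-trick}, applied now to $z^{j,bl}$, which trades the $L^2$ loss for an $H^1(\Omega_\FF)^2$-norm on $\vec\varphi$. Aggregating every new contribution and combining with \eqref{eq:cor:1} yields the required $C\varepsilon^{3/2}(\norm{\nabla\vec\varphi}_{L^2(\Omega^\varepsilon)^4}+\norm{\vec\varphi}_{H^1(\Omega_\FF)^2})$ bound.
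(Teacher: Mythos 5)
Your proposal follows essentially the same route as the paper's proof: admissibility of $\vec U^{3,\varepsilon}$ via the jump data \eqref{eq:BL-q-3}--\eqref{eq:BL-q-5}, cancellation of the $\{x_2=-H\}$ boundary integral in \eqref{eq:cor:1} through the Neumann datum \eqref{eq:BL-q-4}, the algebraic collapse of the surviving interface terms into the single $\varphi_2$-contribution, and Poincar\'e-type estimates for the residual volume terms. The one point where your reasoning deviates is the alleged ``delicate'' pressure residual on $\Omega_\FF$: since $\vec q^{j,bl}$ and $z^{j,bl}$ are anchored at the lower boundary and decay exponentially in $|y_2|$, their contributions over $\Omega_\FF$ are of order $e^{-\gamma H/\varepsilon}$ and are simply absorbed into the exponentially small terms, so no antiderivative construction analogous to \eqref{eq:V}--\eqref{eq:estimate-trick} is needed there (your workaround would be harmless, merely superfluous); all residuals that genuinely require estimation live in $\Omega_\PM^\varepsilon$ and are handled exactly as you describe, via \eqref{eq:trace} and the weighted decay of $\vec q^{j,bl}$ and $z^{j,bl}$.
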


\begin{proof}
From~\cref{cor:1} we have $\vec U^{2,\varepsilon} \in H^1(\Omega^\varepsilon)^2$. Taking into account that $\vec q^{j,bl} \in H^1(Z^-)^2$, we get $\vec U^{3,\varepsilon} \in H^1(\Omega^\varepsilon)^2$. Correction of the velocity on the lower boundary yields $\vec U^{3,\varepsilon} \in V_\text{per}(\Omega^\varepsilon)$. As we already proved estimate~\eqref{eq:cor:1}, we only focus on the newly added terms. The variational formulation corresponding to the newly added terms to $\vec U^{2,\varepsilon},\ P^{2,\varepsilon}$, caused by boundary layer problem~\eqref{eq:BL-q}, reads
\begin{align}
    \int_{\Omega^\varepsilon}& \nabla \left( \varepsilon^2 \sum_{j=1}^2  \frac{\partial p^\PM }{\partial x_j}(x_1, -H) \vec q^{j,bl}\left(\frac{x_1}{\varepsilon}, -\frac{x_2+H}{\varepsilon} \right)
    \right) \colon \nabla  \vec  \varphi \notag
    \\
    &- \int_{\Omega^\varepsilon}   \varepsilon \sum_{j=1}^2 \frac{\partial p^\PM }{\partial x_j}(x_1, -H) z^{j,bl}\left(\frac{x_1}{\varepsilon}, -\frac{x_2+H}{\varepsilon} 
    \right) \operatorname{div} \vec \varphi \notag
    \\ 
    &-\int_{\{x_2 = -H\}}   \varepsilon^2 \sum_{j=1}^2  \frac{\partial p^\PM }{\partial x_j}(x_1, -H)  \frac{\partial w_1^{j,\varepsilon}}{\partial x_2}(x_1,0) 
    \varphi_1 
    \notag
    \\
    =
    &- \int_{\Omega^\varepsilon_\PM} \underbrace{ \varepsilon^2  \sum_{j=1}^2  \frac{\partial^2}{\partial x_1^2} \frac{\partial p^\PM }{\partial x_j}(x_1, -H) \vec q^{j,bl}\left(\frac{x_1}{\varepsilon}, -\frac{x_2+H}{\varepsilon} 
    \right)}_{=: \vec A_\varepsilon^{j,13}} \vdot  \vec  \varphi \notag
     \\[-1ex]
    &- 2 \int_{\Omega^\varepsilon_\PM} \underbrace{ \varepsilon^2 \sum_{j=1}^2  \nabla \frac{\partial p^\PM }{\partial x_j}(x_1, -H) \nabla \vec q^{j,bl}\left(\frac{x_1}{\varepsilon}, -\frac{x_2+H}{\varepsilon} 
    \right)}_{=: \vec A_\varepsilon^{j,23}} \vdot  \vec  \varphi \notag
    \\[-1ex]
    &+ \int_{\Omega^\varepsilon_\PM} \underbrace{ \varepsilon \sum_{j=1}^2 \nabla \frac{\partial p^\PM }{\partial x_j}(x_1, -H) z^{j,bl}\left(\frac{x_1}{\varepsilon}, -\frac{x_2+H}{\varepsilon}
    \right)}_{=: \vec A_\varepsilon^{j,33}} \vdot \vec \varphi + \text{ e.s.t.}
    \label{eq:proof-2}
\end{align} 
The integral over $\{x_2=-H\}$ on the left hand side in~\cref{eq:proof-2} cancels with the corresponding integral on the left hand side in~\cref{eq:cor:1} taking into account the periodicity of $\vec w^{j,\varepsilon}$.
Note that the boundary layer velocity and pressure jumps across the lower boundary are zero, because the functions $\vec q^{j,bl}$ and $z^{j,bl}$ are continuous. The integrals over $\Omega_\FF$ are exponentially small due to the fact that $\vec q^{j,bl},  \ z^{j,bl} \rightarrow 0$ for $x_2 \rightarrow - \infty$, therefore, they do not appear in~\cref{eq:proof-2}. 

Using the Poincar\'{e} inequality and inequality~\eqref{eq:estimate-trick} we obtain the following estimates for the integrals on the right hand side of~\cref{eq:proof-2}:
\begin{align}
    &\bigg| \! \int\limits_{\Omega^\varepsilon_\PM} \!  \!  \sum_{j=1}^2 \vec 
    A^{j,13}_\varepsilon  \vdot \vec  \varphi   \bigg| 
    \leq C \varepsilon^3 \norm{\nabla \vec \varphi}_{L^2(\Omega^\varepsilon_\PM)^4},
    \ 
    \bigg| \! \int\limits_{\Omega^\varepsilon_\PM} \sum_{j=1}^2 \! \!  \vec A^{j,23}_\varepsilon  \vdot \vec  \varphi \bigg| 
    \leq C \varepsilon^2 \norm{\nabla \vec \varphi}_{L^2(\Omega^\varepsilon_\PM)^4}, \label{eq:estimates-q-1}
    \\[-0.5ex]
    &\bigg| \! \int\limits_{\Omega^\varepsilon_\PM} \!   \sum_{j=1}^2 \vec A_\varepsilon^{j,33} \vdot \vec \varphi \bigg|
    \leq C \varepsilon^2 \norm{\nabla \vec \varphi}_{L^2(\Omega^\varepsilon_\PM)^4} .\label{eq:estimates-q-2}
\end{align}
Taking into account~\cref{eq:estimates-q-1,eq:estimates-q-2,eq:proof-1,eq:proof-2}, we obtain estimate~\eqref{eq:cor:2}.
\end{proof}
To eliminate the integrals over $\Sigma$ on the left hand side of~\cref{eq:cor:2} we set 
\begin{align}
p^\PM = p^\FF - \frac{\partial v_2^\FF}{\partial x_2} + N_s^{bl} \frac{\partial v_1^\FF}{\partial x_2} \quad \text{on } \Sigma. \label{eq:pressure-IC-derivation}
\end{align}
Therewith, we can rewrite inequality~\eqref{eq:cor:2} as follows
\begin{align}
    \bigg| \int_{\Omega^\varepsilon}& \nabla \vec U^{3,\varepsilon} \colon \nabla  \vec  \varphi - \int_{\Omega^\varepsilon} P^{3,\varepsilon} \operatorname{div} \vec \varphi 
    \bigg|
    \leq C\varepsilon^{3/2} \left( \norm{\nabla \vec \varphi}_{L^2(\Omega^\varepsilon)^4}
    + \norm{\vec \varphi}_{H^1(\Omega_\FF)^2} \right). \label{eq:cor:3-new}
\end{align}
It remains to estimate the divergence of the velocity error function $\vec U^{3,\varepsilon}$. 

\subsubsection{Correction of compressibility effects}
\label{sec:compressibility}
Consider the divergence of the velocity error function 
\vspace{-1.5ex}
\begin{align} \label{eq:divergence}
    \operatorname{div} \vec U^{3,\varepsilon}
    &=  \mathcal{H}(-x_2) \ \varepsilon^2 \sum_{j=1}^2 \vec{w}^{j, \varepsilon} \vdot \nabla \frac{\partial p^\PM }{\partial x_j} 
    + \varepsilon 
    \left(t_1^{bl,\varepsilon}- \mathcal{H}(x_2)N_1^{bl} \right)  \frac{\partial}{\partial x_1} \frac{\partial v_1^\FF}{\partial x_2}\bigg|_\Sigma \notag
    \\[-0.5ex]
    & \quad -\varepsilon^2 \sum_{j=1}^2 \left( {\beta_1}^{j,bl, \varepsilon} - \mathcal{H}(x_2) M_1^{j,bl} \right) \frac{\partial}{\partial x_1} \frac{\partial p^\PM }{\partial x_j}\bigg|_\Sigma \notag
    \\[-0.5ex]
    & \quad + \varepsilon^2 \sum_{j=1}^2  \frac{\partial}{\partial x_1} \frac{\partial p^\PM }{\partial x_j}(x_1, -H) q_1^{j,bl}\left(\frac{x_1}{\varepsilon}, -\frac{x_2+H}{\varepsilon} \right).
\end{align}
Taking into account~\cref{eq:BL-t-velocity-constant-estimates}, we have $\norm{\operatorname{div} \vec U^{3,\varepsilon}} \leq C \varepsilon^{3/2}$. However, as already mentioned in~\cref{sec:continue-asym-exp}, the velocity approximation in the free-flow region should be at least of order $\mathcal{O}(\varepsilon^{3/2})$. Thus, the divergence of the velocity error needs to be at least of order $\mathcal{O}(\varepsilon^{5/2})$. 
We consider each contribution to the divergence in a separate step \emph{a)~--~d)} and correct it using boundary layers and auxiliary functions if necessary. 

\paragraph{a) Compressibility effects coming from the term with $\vec w^{j,\varepsilon}$}
To correct the compressibility effects coming from the cell problems, we consider the 
aux-iliary 
function $\vec \gamma^{j,i}$ for $i,j=1,2$ satisfying~\cite[section~4.4]{Carraro_etal_15}:
\vspace{-2ex}
\begin{subequations}
\begin{align}
\operatorname{div}_{\vec y} \vec \gamma^{j,i}  &= w_i^j - \frac{k_{ij}}{|Y_\text{f}|} \quad \text{in } Y_\text{f}, \label{eq:aux-func-1 }
\\
 \vec \gamma^{j,i} = \vec 0 \quad \text{on } \partial Y_\text{f} \setminus \partial Y,& \quad \vec \gamma^{j,i} \text{ is 1-periodic in $\vec y$}. \label{eq:aux-func-2}
\end{align}\label{eq:aux-func}
\end{subequations}
After~\cite{Carraro_etal_15,Jaeger_Mikelic_96}, there exists at least one solution $\vec \gamma^{j,i} \in H^1(Y_\text{f})^2 \cap C^\infty_\text{loc}(\cup_{k=1}^{\infty}( Y_\text{f} - (0,k))^2) $ to problem~\eqref{eq:aux-func}. We introduce $\vec \gamma^{j,i, \varepsilon}(\vec x) = \varepsilon \vec \gamma^{j,i} (\vec y)$ for $\vec x \in \Omega_\PM^\varepsilon$ and extend it by zero in $\Omega_\PM \setminus \Omega_\PM^\varepsilon$. 
Since $\vec \gamma^{j,i,\varepsilon}$ is defined in the porous-medium domain only, we have to correct its values on the interface $\Sigma$. In order to do this, we use the following boundary layer problem~\cite{Carraro_etal_15,Jaeger_Mikelic_96}:
\vspace{-1ex}
\begin{subequations}
\begin{align}
-\Delta_{\vec y} \vec \gamma^{j,i,bl} + \nabla_{\vec y} \pi^{j,i,bl} &= \vec0 \quad \text{in } Z^{+}  \cup Z^{-}, \label{eq:BL3a-1}
\\
\operatorname{div}_{\vec y} \vec \gamma^{j,i,bl } &= 0 \quad \text{in } Z^{+}  \cup Z^{-}, \label{eq:BL3a-2}
\\ 
\big\llbracket \vec \gamma^{j,i,bl} \big\rrbracket_S &= \vec \gamma^{j,i} \hspace{9.5ex} \text{on } S, \label{eq:BL3a-3}
\\
\big\llbracket ( \nabla_{\vec y} \vec \gamma^{j,i,bl} - \pi^{j,i,bl} \ten{I} ) \vec{e}_2\big\rrbracket_S &= - \nabla_{\vec y } \vec \gamma^{j,i} \vec e_2  \quad \text{on } S, \label{eq:BL3a-4}
\\
\vec \gamma^{j,i,bl} = \vec 0 \quad \text{on } \cup_{k=1}^{\infty}(\partial Y_s - (0,k)),& \quad \{\vec \gamma^{j,i,bl}, \pi^{j,i,bl}\} \text{ is 1-periodic in $y_1$}. \label{eq:BL3a-5}
\end{align}  \label{eq:BL3}
\end{subequations}
It is proven in~\cite{Jaeger_Mikelic_96} that there exist a constant $\gamma \in (0,1)$ such that
\begin{align*}
    & e^{\gamma |y_2|}\nabla_\vec{y} \vec \gamma^{j,i,bl} \in L^2(Z^+ \cup Z^-)^4, \ e^{\gamma |y_2|} \vec \gamma^{j,i,bl} \in L^2(Z^-)^2, 
    \ e^{\gamma |y_2|} \pi^{j,i,bl} \in L^2(Z^-) .
\end{align*}
We introduce $
\vec \gamma^{j,i,bl,\varepsilon}(\vec x) = \varepsilon \vec \gamma^{j,i,bl}(\vec y), \ \pi^{j,i,bl,\varepsilon}(\vec x) = \pi^{j,i,\varepsilon}(\vec y)\text{ for } \vec x \in \Omega^\varepsilon
$
and extend $\vec \gamma^{j,i,bl,\varepsilon}$ by zero for $\vec x \in \Omega \setminus \Omega^\varepsilon$. After~\cite{Jaeger_Mikelic_96}, there exist $C_\pi^{j,i,bl}$ and $\vec C^{j,i,bl}$ such that
\begin{align*}
    & |\vec \gamma^{j,i,bl} - \vec C^{j,i,bl} | + |\pi^{j,i,bl} - C_{\pi}^{j,i,bl}| \leq Ce^{-\gamma y_2} \quad \text{for} \; y_2 > 0.
\end{align*}
For further details, e.g. existence and uniqueness results of the solution to the boundary layer problem~\eqref{eq:BL3}, we refer the reader to \cite{Carraro_etal_15,Jaeger_Mikelic_96}. 
Taking into account problems~\eqref{eq:aux-func} and \eqref{eq:BL3}, we obtain the following velocity and pressure error functions
\vspace{-3ex}
\begin{align*}
     \vec U^{4,\varepsilon} &= \vec U^{3,\varepsilon} - \mathcal{H}(-x_2)\varepsilon^2 \! \sum_{i,j=1}^2 \! \vec \gamma^{j,i,\varepsilon} \frac{\partial^2 p^\PM}{\partial x_i x_j} \notag
     \\
     & \quad - \varepsilon^2 \! \sum_{i,j=1}^2  \! \left( \vec \gamma^{j,i,bl,\varepsilon} \! - \! \mathcal{H}(x_2)  \varepsilon \vec  C^{j,i,bl} \right)\frac{\partial^2 p^\PM}{\partial x_i x_j}\bigg|_\Sigma,
    \\[-1ex]
     P^{4,\varepsilon} &=  P^{3,\varepsilon} - \varepsilon^2 \sum_{i,j=1}^2 \left( \pi^{j,i,bl,\varepsilon} - C_\pi^{j,i,bl} \right)\frac{\partial^2 p^\PM}{\partial x_i x_j}\bigg|_\Sigma.
\end{align*}

\paragraph{b) Compressibility effects coming from the term with $t_1^{bl,\varepsilon}$}

To eliminate the next problematic term in~\cref{eq:divergence}, we consider the following boundary layer problem proposed in~\cite[section~1.2.8]{Jaeger_Mikelic_96}:
\begin{subequations}
\begin{align}
    \operatorname{div}_\vec{y} \vec \zeta^{bl} &=  t_1^{bl} (\vec y) - \mathcal{H}(x_2)N_1^{bl} \quad \text{ in } Z^+ \cup Z^-,
    \\
    \llbracket \vec \zeta^{bl} \rrbracket_S &= - \left( \int_{Z^{bl}}  (t_1^{bl} (\vec y) - \mathcal{H}(x_2)N_1^{bl}) \ \text{d} \vec y \right) \vec e_2 \quad \text{ on } S, \label{eq:zeta-int}
    \\
    \vec \zeta^{bl} &= \vec 0 \quad \text{ on } \cup_{k=1}^{\infty}(\partial Y_s - (0,k)), \quad \vec \zeta^{bl} \text{ is 1-periodic in $y_1$}.
    \end{align}
\end{subequations}
For existence and uniqueness results we refer the reader  to~\cite{Jaeger_Mikelic_96}. 

Correction of the velocity $\vec v^{4,\varepsilon}_{\text{approx}}$ using $\vec \zeta^{bl}$ leads to an additional disturbing term at the interface $\Sigma$. To eliminate this term, we construct a counter flow, i.e. consider $\{\vec v^{\cf}, p^{\cf}\}$ satisfying the Stokes equations~\eqref{eq:macro-ff-a} and the periodicity condition in~\cref{eq:macro-ff-b-periodic}. This counter-flow system is completed with the following conditions on the interface and the upper boundary
\begin{align}
    &v^{\cf}_1 = \llbracket \zeta_1^{bl} \rrbracket_S \frac{\partial}{\partial x_1} \frac{\partial v_1^\FF}{\partial x_2}\bigg|_\Sigma, 
    \qquad
    v^{\cf}_2 = \llbracket \zeta_2^{bl} \rrbracket_S \frac{\partial}{\partial x_1} \frac{\partial v_1^\FF}{\partial x_2}\bigg|_\Sigma \quad \text{ on } \Sigma,
    \\
    &v^{\cf}_1 = 0, 
    \qquad
    v^{\cf}_2 = v^{\cf,\text{in}}_2 \quad \text{ on } \{x_2=h\},\label{eq:counterflow-1-interface}
\end{align}
where the compatibility condition 
\begin{align*}
    \int_0^L v^{\cf,\text{in}}_2 \text{d} x_1 =
    \int_0^L \llbracket \zeta_2^{bl} \rrbracket_S \frac{\partial}{\partial x_1} \frac{\partial v_1^\FF}{\partial x_2}\bigg|_\Sigma \text{d} x_1
    \label{eq:counterflow-1-top}
\end{align*}
has to be fulfilled. 
Therefore, the new error functions read
\begin{subequations}\label{eq:error-4b}
\begin{align*}
    \vec U^{5,\varepsilon} =& \vec U^{4,\varepsilon}
    - \varepsilon^2 \vec \zeta^{bl}(\vec y) \frac{\partial}{\partial x_1} \frac{\partial v_1^\FF}{\partial x_2}\bigg|_\Sigma 
     + \varepsilon^2 \mathcal{H}(x_2)  \vec v^{\cf},
    \quad
     P^{5,\varepsilon} =  P^{4,\varepsilon}  + \varepsilon^2 \mathcal{H}(x_2)p^{\cf}.
\end{align*}
\end{subequations}

\paragraph{c) Compressibility effects coming from the term with $\beta_1^{j,bl,\varepsilon}$} 
The 
com-pressibility effects coming from the boundary layer problem~\eqref{eq:BL-beta} are small and do not require any correction. Using~\cref{eq:BL-beta-estimates} we obtain the following estimate 
\begin{align}
    \bigg| \int_{\Omega^\varepsilon} \varepsilon^2 \sum_{j=1}^2 \left( {\beta_1}^{j,bl, \varepsilon} - \mathcal{H}(x_2) M_1^{j,bl} \right) \frac{\partial}{\partial x_1} \frac{\partial p^\PM }{\partial x_j}\bigg|_\Sigma \bigg|
    \leq C\varepsilon^{5/2}.
\end{align}

\paragraph{d) Compressibility effects coming from the term with $q_1^{j,bl}$}
As the last step, we correct the contribution to the divergence~\eqref{eq:divergence} coming from the outer  boundary layer problem~\eqref{eq:BL-q}. For this purpose, we consider the boundary layer corrector proposed in~\cite{Carraro_etal_15,Jaeger_Mikelic_96} which satisfies
\begin{subequations}\label{eq:BL3d}
\begin{align}
 \operatorname{div}_\vec{y} \vec Z^{j,bl} &=  q_1^{j,bl}\quad \text{in }  Z^- ,
 \\
 \llbracket 
 \vec Z^{j,bl}  \rrbracket_S 
 &= - \left( \int_{Z^-} q_1^{j,bl} \ \text{d} \vec y\right) \vec e_2 \quad  \text{on } S ,\\
 \vec Z^{j,bl} &= \vec 0 \quad \text{ on } \cup_{k=1}^{\infty}(\partial Y_s - (0,k)), \quad \vec Z^{j,bl} \text{ is 1-periodic in $y_1$}.
\end{align}
\end{subequations}
After~\cite{Jaeger_Mikelic_96}, there exists at least one $\vec Z^{j,bl} \in H^1(Z^+ \cup Z^-)^2 \cap C_\text{loc}^\infty(Z^+ \cup Z^-)^2.$
Moreover, $\vec Z^{j,bl} \in W^{1,q}((0,1)^2)$, $\vec Z^{j,bl} \in W^{1,q}(Y-(0,1)^2)$ $\forall q \in [1, \infty)$ and there exists a constant $\gamma \in(0,1)$ such that $e^{\gamma |y_2|} \vec Z^{j,bl} \in H^1(Z^+ \cup Z^-)^2$. Using this corrector, we obtain the following velocity and pressure error functions
\begin{subequations}\label{eq:error-4d}
\begin{align*}
     \vec U^{6,\varepsilon} &= \vec U^{5,\varepsilon} \!-\! \varepsilon^2 \sum_{j=1}^2 \frac{\partial}{\partial x_1} \frac{\partial p^\PM}{\partial x_j}(x_1,-H) \left( 
     \vec Z^{j,bl,\varepsilon} + \varepsilon R^\varepsilon(\vec e_2) \int_{Z^-} \!q_1^{j,bl}  \text{d} \vec y \right), \;\;
     \\
     P^{6,\varepsilon} &=  P^{5,\varepsilon}, 
\end{align*}
\end{subequations}
where $R^\varepsilon$ is the same restriction operator as in~\cite[section~4.5]{Carraro_etal_15} and 
\[
\vec Z^{j,bl,\varepsilon}(\vec x) = \varepsilon \vec Z^{j,bl}\left(\frac{x_1}{\varepsilon}, -\frac{x_2+H}{\varepsilon}\right) \quad \text{for } \vec x \in \Omega^\varepsilon.
\]
The term $\varepsilon R^\varepsilon(\vec e_2) \int_{Z^-} q_1^{j,bl} \ \text{d} \vec y $ corrects the value of  $\vec Z^{j,bl,\varepsilon}$ on $\{x_2=-H\}$.

\begin{corollary}\label{cor:3}
 It holds $\vec U^{6,\varepsilon} \in V_\text{per}(\Omega^\varepsilon)$ and $\norm{\operatorname{div} \vec U^{6,\varepsilon} } \leq C\varepsilon^{5/2}$.
\end{corollary}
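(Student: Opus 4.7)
The plan is to verify both assertions in sequence, treating them as consistency checks on the correctors introduced in the subparagraphs a)--d). For the membership statement, \cref{cor:2} already gives $\vec U^{3,\varepsilon}\in V_\text{per}(\Omega^\varepsilon)$, so I only need to confirm that each of the three new correctors that take $\vec U^{3,\varepsilon}$ to $\vec U^{6,\varepsilon}$ preserves $H^1$-regularity across $\Sigma$, the no-slip condition on all solid inclusions, the full vanishing at $\{x_2=h\}$, the vanishing of the normal component at $\{x_2=-H\}$, and $L$-periodicity in $x_1$. The one-sided cut-offs $\mathcal{H}(-x_2)\vec \gamma^{j,i,\varepsilon}$ and $\mathcal{H}(x_2)\vec v^{\cf}$ would destroy trace continuity if taken in isolation; the jump condition~\eqref{eq:BL3a-3} and the prescribed interface values~\eqref{eq:counterflow-1-interface} are exactly what is needed to balance these jumps against the boundary layers $\vec \gamma^{j,i,bl,\varepsilon}$ and $\vec \zeta^{bl}$. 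The stabilisation constants $\varepsilon\vec C^{j,i,bl}$ subtracted off in $\vec U^{4,\varepsilon}$, together with the exponential decay at $+\infty$, ensure smallness at the upper boundary up to e.s.t., while the restriction-operator term $\varepsilon R^{\varepsilon}(\vec e_2)\int_{Z^-}q_1^{j,bl}\,\text{d}\vec y$ is precisely designed to annihilate the stabilising value of $\vec Z^{j,bl,\varepsilon}$ on $\{x_2=-H\}$.

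For the divergence estimate I would decompose
\begin{equation*}
\operatorname{div}\vec U^{6,\varepsilon}=\operatorname{div}\vec U^{3,\varepsilon}+\sum_{k=4}^{6}\operatorname{div}\bigl(\vec U^{k,\varepsilon}-\vec U^{k-1,\varepsilon}\bigr),
\end{equation*}
insert the explicit formula~\eqref{eq:divergence}, and match each of its four summands to the corresponding corrector via the chain-rule identity $\operatorname{div}_{\vec x}f(\vec y)=\varepsilon^{-1}\operatorname{div}_{\vec y}f$. The auxiliary function $\vec \gamma^{j,i,\varepsilon}$ is built so that its divergence equals $w_i^j-k_{ij}/|Y_\text{f}|$: the first summand cancels the $\vec w^{j,\varepsilon}$-contribution in~\eqref{eq:divergence} pointwise, while the second summed against $\partial^2_{x_i x_j}p^\PM$ vanishes because $\operatorname{div}\vec v^\PM=0$; the boundary-layer corrector $\vec \gamma^{j,i,bl,\varepsilon}$ is divergence-free by~\eqref{eq:BL3a-2}. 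The pair $\{-\varepsilon^2\vec \zeta^{bl},\,\varepsilon^2\mathcal{H}(x_2)\vec v^{\cf}\}$ removes the $t_1^{bl,\varepsilon}$-contribution, since $\operatorname{div}_{\vec y}\vec \zeta^{bl}=t_1^{bl}-\mathcal{H}(x_2)N_1^{bl}$ and $\vec v^{\cf}$ is divergence-free. The $\beta_1^{j,bl,\varepsilon}$-contribution is already bounded by $C\varepsilon^{5/2}$ with no corrector added. Finally, $\vec Z^{j,bl}$ is defined so that $\operatorname{div}_{\vec y}\vec Z^{j,bl}=q_1^{j,bl}$, cancelling the remaining contribution to~\eqref{eq:divergence}. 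After these cancellations the leftover terms each carry one extra $\nabla_{\vec x}$-derivative of macroscopic data multiplied by a $\vec y$-periodic factor controlled in $L^2$ by estimates of the same type as~\eqref{eq:BL-t-properties} and~\eqref{eq:BL-beta-estimates}; combined with the Poincar\'e-type bounds~\eqref{eq:trace}, each one is at most $\mathcal{O}(\varepsilon^{5/2})$ in the $L^2(\Omega^\varepsilon)$-norm.

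The hardest part is the bookkeeping: tracking simultaneously the interface traces of every corrector, the vanishing at the outer boundaries, and the exact order in $\varepsilon$ of every remainder. Verifying that the integrated compatibility condition for the counterflow inflow $v_2^{\cf,\text{in}}$ is satisfied alongside the other constructions, and that the jumps introduced by the one-sided cut-offs are cancelled pointwise rather than only in an averaged sense, are the most delicate points. Once these consistency checks are in place, the $\varepsilon^{5/2}$ divergence bound follows mechanically by reusing the estimates already employed in the proofs of~\cref{cor:1,cor:2}.
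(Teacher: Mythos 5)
Your proposal is correct and follows essentially the same route as the paper: membership in $V_\text{per}(\Omega^\varepsilon)$ is checked by construction, the divergence is computed explicitly so that the defining identities of $\vec\gamma^{j,i}$, $\vec\zeta^{bl}$ and $\vec Z^{j,bl}$ cancel the corresponding summands of~\eqref{eq:divergence} (with the residual $k_{ij}$-term vanishing because $\operatorname{div}\vec v^\PM=0$), and the leftover terms carrying an extra macroscopic derivative are bounded by $C\varepsilon^{5/2}$ via the decay/$L^2$ estimates of the boundary layer functions. The paper simply records the resulting formula~\eqref{eq:cor:3-proof} directly and invokes the Poincar\'e inequality together with the cited estimate from the J\"ager--Mikeli\'c reference, which is exactly the bookkeeping you describe.
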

\begin{proof}
By construction $\vec U^{6,\varepsilon} \in V_\text{per}(\Omega^\varepsilon)$. We have
\vspace{-0.25ex}
\begin{align}
    \diver & \vec U^{6,\varepsilon}
    = -\varepsilon^2 \sum_{j=1}^2 \left( {\beta_1}^{j,bl, \varepsilon} - \mathcal{H}(x_2) M_1^{j,bl} \right) \frac{\partial}{\partial x_1} \frac{\partial p^\PM }{\partial x_j}\bigg|_\Sigma \! \!
    + \frac{\mathcal{H}(-x_2)}{|Y_\text{f}|} \underbrace{\sum_{j=1}^2 \varepsilon^2  k_{ij}  \frac{\partial^2 p^\PM}{\partial x_i x_j} }_{=\diver \vec v^\PM = 0} 
    \notag
    \\
    &- \varepsilon^3 \sum_{j=1}^2 \left(\vec \gamma^{j,i,bl}(\vec y) - \mathcal{H}(x_2) \vec C^{j,i,bl}  \right) \vdot \nabla \frac{\partial^2 p^\PM}{\partial x_i x_j}\bigg|_\Sigma
    - \varepsilon^2 \zeta_1^{bl}(\vec y)  \frac{\partial}{\partial x_1} \frac{\partial v_1^\FF}{\partial x_2}\bigg|_\Sigma
    \notag
    \\
    &- \varepsilon^3 \sum_{j=1}^2 \nabla \frac{\partial}{\partial x_1} \frac{\partial p^\PM}{\partial x_j} (x_1, -H) \vdot  \! \left( \vec Z^{j,bl} \!  \! \left(\frac{x_1}{\varepsilon}, -\frac{x_2+H}{\varepsilon}\right)  \!  \! +  \!
    R^\varepsilon(\vec e_2)  \! \int_{Z^-} \! \! \! q_1^{j,bl}  \text{d} \vec y\right) \!. 
    \label{eq:cor:3-proof}
\end{align}
Applying the Poincar\'{e} inequality and using the result from~\cite[equation (1.78)]{Jaeger_Mikelic_96} to estimate the divergence, we complete the proof.
\end{proof}

\begin{corollary}\label{cor:4}
For all  $\vec \varphi \in V_\text{per}(\Omega^\varepsilon)$ the following inequality holds
\begin{align}\label{eq:cor:4}
    \bigg| \int_{\Omega^\varepsilon} \nabla \vec U^{6,\varepsilon} & \colon \nabla  \vec  \varphi - \int_{\Omega^\varepsilon} P^{6,\varepsilon} \operatorname{div} \vec \varphi 
    \bigg| 
    \leq C\varepsilon^{3/2} \left( \norm{\nabla \vec \varphi}_{L^2(\Omega^\varepsilon)^4}
    + \norm{\vec \varphi}_{H^1(\Omega_\FF)^2} \right). 
\end{align}
\end{corollary}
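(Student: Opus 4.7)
My plan is to leverage the estimate \eqref{eq:cor:3-new}, already established via \Cref{cor:2} and the pressure interface condition \eqref{eq:pressure-IC-derivation}, as the baseline, and then control separately the four compressibility correctors added in steps a)--d) of \Cref{sec:compressibility}. Splitting
\begin{align*}
\int_{\Omega^\varepsilon}\nabla\vec U^{6,\varepsilon}\colon\nabla\vec\varphi - \int_{\Omega^\varepsilon}P^{6,\varepsilon}\diver\vec\varphi
&= \left(\int_{\Omega^\varepsilon}\nabla\vec U^{3,\varepsilon}\colon\nabla\vec\varphi - \int_{\Omega^\varepsilon}P^{3,\varepsilon}\diver\vec\varphi\right) \\
&\quad + \mathcal R_a+\mathcal R_b+\mathcal R_c+\mathcal R_d,
\end{align*}
where $\mathcal R_a,\dots,\mathcal R_d$ collect the weak-form contributions of the correctors added in steps a)--d), the first parenthesis is bounded by $C\varepsilon^{3/2}\bigl(\|\nabla\vec\varphi\|_{L^2(\Omega^\varepsilon)^4}+\|\vec\varphi\|_{H^1(\Omega_\FF)^2}\bigr)$ via \eqref{eq:cor:3-new}, so it suffices to show that each $\mathcal R_\ast$ satisfies at least the same bound.

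For $\mathcal R_a$ I would integrate by parts on the porous-medium side, using \eqref{eq:aux-func} to identify the bulk divergence contribution and the jump relations \eqref{eq:BL3a-3}--\eqref{eq:BL3a-4} to cancel the artificial $\Sigma$-integrals arising from the discontinuity of the correctors across the interface. The remaining volume terms carry an $\varepsilon^2$ prefactor together with exponentially decaying gradients in $Z^-$ (controlled by the trace estimates \eqref{eq:trace}) and, on the free-flow side, pressure residuals $\pi^{j,i,bl,\varepsilon}-C_\pi^{j,i,bl}$ that only stabilize to a constant; these last contributions are handled by the $V^\varepsilon$-integration-by-parts trick from \eqref{eq:estimate-trick}, giving $\mathcal R_a=\mathcal O(\varepsilon^{5/2})$. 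The contribution $\mathcal R_c$ is exactly the estimate displayed at the end of step c) in \Cref{sec:compressibility}, of order $\varepsilon^{5/2}$. For $\mathcal R_d$ I would reproduce the computations of \Cref{cor:2} around \eqref{eq:estimates-q-1}--\eqref{eq:estimates-q-2}: the exponential decay of $\vec Z^{j,bl}$, Poincar\'e's inequality, and the fact that the restriction-operator term $\varepsilon R^\varepsilon(\vec e_2)\int_{Z^-} q_1^{j,bl}\,\mathrm d\vec y$ is $\mathcal O(\varepsilon)$ in $L^2$, together with the $\varepsilon^2$ prefactor in front of the whole corrector, yield $\mathcal R_d=\mathcal O(\varepsilon^{5/2})$.

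The main obstacle is $\mathcal R_b$. Unlike the boundary-layer correctors, the counter-flow $\{\vec v^{\cf},p^{\cf}\}$ is a genuine macroscopic Stokes field supported on all of $\Omega_\FF$, so it does not enjoy exponential decay away from $\Sigma$ and its $H^1(\Omega_\FF)$-norm must be shown to be bounded uniformly in $\varepsilon$. This requires verifying the compatibility condition stated below \eqref{eq:counterflow-1-interface} and invoking standard Stokes well-posedness with the smooth, $\varepsilon$-independent boundary data $\llbracket\vec\zeta^{bl}\rrbracket_S\,\partial_{x_1}\partial_{x_2}v_1^\FF|_\Sigma$. The design of $\vec\zeta^{bl}$ and its jump condition \eqref{eq:zeta-int} ensure that the disturbing interface integral generated by the bulk divergence correction cancels exactly against the counter-flow contribution on $\Sigma$; once this cancellation is recorded, the $\varepsilon^2$ prefactor in the definition of $\vec U^{5,\varepsilon}$ and $P^{5,\varepsilon}$ immediately yields $\mathcal R_b=\mathcal O(\varepsilon^2)$. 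Combining the four bounds on $\mathcal R_a,\dots,\mathcal R_d$ with \eqref{eq:cor:3-new} produces \eqref{eq:cor:4}.
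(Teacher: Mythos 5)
Your proposal follows essentially the same route as the paper: the paper's proof likewise writes the weak form for $\vec U^{6,\varepsilon},P^{6,\varepsilon}$ as the $\vec U^{3,\varepsilon},P^{3,\varepsilon}$ contribution (bounded via \eqref{eq:cor:3-new}) plus the explicitly listed corrector terms from steps a)--d), each carrying an $\varepsilon^2$ or $\varepsilon^3$ prefactor and handled by the same trace, decay, and $V^\varepsilon$-type arguments you describe. The only slight inaccuracy is that step c) adds no corrector to the approximations, so $\mathcal R_c$ is vacuous rather than "the displayed estimate of step c)" (which concerns the divergence bound for \Cref{cor:3}), but this does not affect the argument.
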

\begin{proof}
We consider the weak formulation for the error functions $\vec U^{6,\varepsilon}, \ P^{6,\varepsilon}$:
\vspace{-0.25ex}
\begin{align}
    \int_{\Omega^\varepsilon}&   \nabla \vec U^{6,\varepsilon} \colon \nabla  \vec  \varphi
    - \int_{\Omega^\varepsilon} P^{6,\varepsilon} \operatorname{div} \vec \varphi
    =  \int_{\Omega^\varepsilon}  \nabla \vec U^{3,\varepsilon} \colon \nabla  \vec  \varphi
    - \int_{\Omega^\varepsilon} P^{3,\varepsilon} \operatorname{div} \vec \varphi \notag
    \\[-0.5ex]
    & -\int_{\Omega^\varepsilon_\PM} \varepsilon^2  \sum_{i,j=1}^2  \left( \nabla_\vec{y} \vec \gamma^{j,i} (\vec{y}) \frac{\partial^2 p^\PM}{\partial x_i x_j}
    + \vec \gamma^{j,i,\varepsilon} \nabla \frac{\partial^2 p^\PM}{\partial x_i x_j}
    \right) \colon \nabla  \vec  \varphi \notag
    \\[-0.5ex]
    &+\int_{\Sigma} \varepsilon^2  \sum_{i,j=1}^2 \left( \nabla_\vec{y} \vec \gamma^{j,i}(y_1,-0) \frac{\partial^2 p^\PM}{\partial x_i x_j}\bigg|_\Sigma \vec e_2
    -
    \left( 
    \vec \gamma^{j,i,\varepsilon} \otimes \nabla  \frac{\partial^2 p^\PM}{\partial x_i x_j}\bigg|_\Sigma \right) \vec e_2 \right)  \vdot \vec  \varphi  \notag
    \\[-0.5ex]
    & + \int_{\Sigma} \varepsilon^3 \sum_{i,j=1}^2  \left( \left( \vec  C^{j,i,bl}  \otimes \nabla  \frac{\partial^2 p^\PM}{\partial x_i x_j}\bigg|_\Sigma \right) \vec e_2 \right) \vdot \vec  \varphi  
    \notag
    \\[-0.5ex]
    & - \int_{\Omega^\varepsilon} 2 \varepsilon^2 \sum_{i,j=1}^2   \left( \left(\vec \gamma^{j,i,bl,\varepsilon} -  \mathcal H(x_2)\varepsilon \vec  C^{j,i,bl} \right) \otimes  \nabla \frac{\partial^2 p^\PM}{\partial x_i x_j}\bigg|_\Sigma
    \right) \colon \nabla  \vec  \varphi  \notag
    \\[-0.5ex]
    &-\int_{\Omega^\varepsilon} \varepsilon^2 \sum_{i,j=1}^2   \left( \vec \gamma^{j,i,bl,\varepsilon} -  \mathcal H(x_2)\varepsilon \vec  C^{j,i,bl} \right) \frac{\partial^2}{\partial x_1^2} \frac{\partial^2 p^\PM}{\partial x_i x_j}\bigg|_\Sigma  \vdot \vec  \varphi \notag
    \\[-0.5ex]
    & - \int_{\Omega^\varepsilon} \varepsilon^2  \left( \nabla \vec \zeta^{bl}(\vec y) \frac{\partial}{\partial x_1} \frac{\partial v_1^\FF}{\partial x_2}\bigg|_\Sigma
    + \vec \zeta^{bl}(\vec y) \otimes \nabla \frac{\partial}{\partial x_1} \frac{\partial v_1^\FF}{\partial x_2}\bigg|_\Sigma\right) \colon \nabla  \vec \varphi \notag
    \\[-0.5ex]
    &
    - \int_{\Omega^\varepsilon}  \varepsilon^2 \sum_{i,j=1}^2 \left( \pi^{j,i,bl,\varepsilon} - C_\pi^{j,i,bl} \right) \nabla \frac{\partial^2 p^\PM}{\partial x_i x_j}\bigg|_\Sigma  \vdot \vec \varphi \notag
    - \int_{\Sigma} \varepsilon^2 \left( \left( \nabla \vec v^{\cf} - p^{\cf} \ten I\right) \vec e_2 \right) \vdot \vec \varphi
    \\[-0.5ex]
    & - \int_{\Omega^\varepsilon}   \varepsilon^2  \sum_{j=1}^2 \nabla \left( 
    \frac{\partial}{\partial x_1} \frac{\partial p^\PM}{\partial x_j}(x_1,-H) \left( \vec Z^{j,bl,\varepsilon} + \varepsilon R^\varepsilon(\vec e_2) \int_{Z^-} q_1^{j,bl} \ \text{d} \vec y \right) \right) \colon \nabla  \vec  \varphi.
 \label{eq:cor:4-proof}
\end{align}
Using  estimate~\eqref{eq:cor:3-new} for the first two terms on the right hand side of~\cref{eq:cor:4-proof}, we prove inequality~\eqref{eq:cor:4}.
\end{proof}

At this stage, the constructed velocity and pressure approximations are accurate enough. Therefore, we can formulate the interface conditions.

\subsubsection{Leading order approximations and interface conditions}
The leading order velocity and pressure approximations are
\begin{align*}
    \vec{v}^{6,\varepsilon}_\text{approx} =& \mathcal{H}(x_2)\vec{v}^\FF - \mathcal{H}(-x_2) \varepsilon^2 \sum_{j=1}^2 \vec{w}^{j, \varepsilon} \frac{\partial p^\PM }{\partial x_j}  \notag
    - \varepsilon \left(\vec t^{bl,\varepsilon} - \mathcal{H}(x_2) \vec N^{bl} \right) \frac{\partial v_1^\FF}{\partial x_2}\bigg|_\Sigma 
    \\[-0.5ex]
    & 
    + \varepsilon^2 \vec \zeta^{bl}(\vec y) \frac{\partial}{\partial x_1} \frac{\partial v_1^\FF}{\partial x_2}\bigg|_\Sigma \notag
    + \varepsilon^2 \sum_{j=1}^2 \left( \vec \beta^{j,bl,\varepsilon} - \mathcal{H}(x_2) \vec M^{j,bl}\right) \frac{\partial p^\PM }{\partial x_j}\bigg|_\Sigma \notag
    \\[-0.5ex]
    &- \varepsilon^2 \sum_{j=1}^2  \frac{\partial p^\PM }{\partial x_j}(x_1, -H) \vec q^{j,bl}\left(\frac{x_1}{\varepsilon}, -\frac{x_2+H}{\varepsilon} \right) \notag
    - \varepsilon^2 \mathcal{H}(x_2) \vec v^{\cf} + \mathcal{O}(\varepsilon^3),
    \\
    p^{6,\varepsilon}_\text{approx} =& \mathcal{H}(x_2)p^\FF + \mathcal{H}(-x_2) p^\PM
    -  \left(s^{bl,\varepsilon} - \mathcal{H}(x_2) N_s^{bl} \right) \frac{\partial v_1^\FF}{\partial x_2}\bigg|_\Sigma  + \mathcal{O(\varepsilon)}.
    \label{eq:leading-oder}
\end{align*}
Condition~\eqref{eq:pressure-IC-derivation} developed in~\cref{sec:lower-boundary} is the new coupling condition~\eqref{eq:NEW-momentum}. Taking into account the leading order approximations for the velocity $\vec{v}^{6,\varepsilon}_\text{approx}$ 
and the fact that $\vec 0= \llbracket \vec v^\varepsilon \rrbracket_\Sigma \approx \llbracket \vec v^\varepsilon_\text{approx} \rrbracket_\Sigma $, we obtain the remaining coupling conditions~\eqref{eq:NEW-mass} and~\eqref{eq:NEW-tangential}.

\begin{corollary}\label{cor:5}
The following estimate holds true
\begin{align*}
    \norm{\nabla \vec U^{6,\varepsilon}}_{L^2(\Omega^\varepsilon)^4}^2
    \leq& C \varepsilon^{5/2} \norm{P^{6,\varepsilon}}_{L^2(\Omega^\varepsilon)}
    \\
    &+C\varepsilon^{3/2}  \left( \norm{\nabla \vec U^{6,\varepsilon}}_{L^2(\Omega_\PM^\varepsilon)^4}
    + \norm{\vec U^{6,\varepsilon}}_{H^1(\Omega_\FF)^2} \right).
\end{align*}
\begin{proof}
    Substitution of $\vec U^{6,\varepsilon} \in V_\text{per}(\Omega^\varepsilon)$ as a test function in~\cref{eq:cor:4} and use of the results from~\cref{cor:3,cor:4} complete the proof.
\end{proof}
\end{corollary}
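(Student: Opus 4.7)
The plan is to prove Corollary 5 by substituting $\vec U^{6,\varepsilon}$ itself as the test function in the estimate from Corollary 4 and then handling the resulting pressure-divergence pairing by Cauchy--Schwarz combined with the divergence control from Corollary 3. Since Corollary 3 has already established that $\vec U^{6,\varepsilon} \in V_\text{per}(\Omega^\varepsilon)$, this substitution is admissible, and the bilinear form on the left of \eqref{eq:cor:4} collapses to $\norm{\nabla \vec U^{6,\varepsilon}}_{L^2(\Omega^\varepsilon)^4}^2$ minus a pressure-divergence term.

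More concretely, the first step is to write, using Corollary 4 with $\vec\varphi = \vec U^{6,\varepsilon}$,
\begin{equation*}
\norm{\nabla \vec U^{6,\varepsilon}}_{L^2(\Omega^\varepsilon)^4}^2 \le \left|\int_{\Omega^\varepsilon} P^{6,\varepsilon}\operatorname{div}\vec U^{6,\varepsilon}\right| + C\varepsilon^{3/2}\bigl(\norm{\nabla \vec U^{6,\varepsilon}}_{L^2(\Omega^\varepsilon)^4} + \norm{\vec U^{6,\varepsilon}}_{H^1(\Omega_\FF)^2}\bigr).
\end{equation*}
The second step is to bound the pressure term by Cauchy--Schwarz and then invoke the divergence estimate $\norm{\operatorname{div}\vec U^{6,\varepsilon}}_{L^2(\Omega^\varepsilon)} \le C\varepsilon^{5/2}$ from Corollary 3, yielding the $C\varepsilon^{5/2}\norm{P^{6,\varepsilon}}_{L^2(\Omega^\varepsilon)}$ contribution on the right-hand side of the target inequality.

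The third step is merely to rewrite the $\norm{\nabla \vec U^{6,\varepsilon}}_{L^2(\Omega^\varepsilon)^4}$ factor appearing from Corollary 4 using the partition $\Omega^\varepsilon = \Omega_\FF \cup \Omega_\PM^\varepsilon$: by the triangle inequality this norm is bounded by $\norm{\nabla \vec U^{6,\varepsilon}}_{L^2(\Omega_\PM^\varepsilon)^4} + \norm{\nabla \vec U^{6,\varepsilon}}_{L^2(\Omega_\FF)^4}$, and the free-flow piece is in turn controlled by $\norm{\vec U^{6,\varepsilon}}_{H^1(\Omega_\FF)^2}$, producing exactly the form stated in the corollary.

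There is no real obstacle here; the proof is essentially a one-line test-function argument, and the only bookkeeping to keep straight is that the divergence estimate of order $\varepsilon^{5/2}$ from Corollary 3 is precisely what is needed to match the $\varepsilon^{5/2}$ prefactor on the pressure term, while the $\varepsilon^{3/2}$ prefactor on the gradient/$H^1$ terms is passed through unchanged from Corollary 4. All the heavy lifting (constructing the correctors, verifying the boundary-layer decay estimates, and removing boundary and compressibility residuals) has already been carried out in \cref{cor:1,cor:2,cor:3,cor:4}, so this final statement is purely the energy-estimate consequence of those results.
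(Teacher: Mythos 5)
Your proposal is correct and is exactly the paper's argument: the paper's proof is the one-line instruction to substitute $\vec U^{6,\varepsilon}\in V_\text{per}(\Omega^\varepsilon)$ as the test function in \cref{eq:cor:4} and combine with \cref{cor:3,cor:4}, and your three steps (test-function substitution, Cauchy--Schwarz with the $\mathcal{O}(\varepsilon^{5/2})$ divergence bound from \cref{cor:3}, and splitting the gradient norm over $\Omega_\PM^\varepsilon$ and $\Omega_\FF$) simply make that explicit. Nothing is missing.
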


\emph{Remark 3.6}.
At this point, we have a similar situation as in~\cite{Carraro_etal_15}. The next step is to estimate $P^{6,\varepsilon}$ using the velocity error function $\vec U^{6,\varepsilon}$. Then, rigorous error estimates for $\vec U^{6,\varepsilon}$ and $P^{6,\varepsilon}$ can be obtained. However, the proof of two-scale convergence for velocity and pressure is beyond the scope of this manuscript.

\section{Model validation}\label{sec:validation}

In this section, we validate the newly derived interface conditions~\eqref{eq:NEW-mass}--\eqref{eq:NEW-tangential} by comparison of macroscale to pore-scale resolved numerical simulations for different pore geometries and flow problems. Besides, the proposed coupling conditions are compared to conditions~\eqref{eq:IC-mass}--\eqref{eq:IC-BJJ} which are typically applied to Stokes--Darcy problems 
even if they are not suitable~\cite{Eggenweiler_Rybak_20}. 
We demonstrate that the Stokes--Darcy model with the newly derived conditions provides accurate results for arbitrary flows to the fluid--porous interface whereas this is not the case for the classical conditions.

\subsection{Discretization and software}

The pore-scale problem~\eqref{eq:pore-scale} is solved using~\texttt{FreeFEM++}\cite{Hecht_12} with the Taylor--Hood (P2/P1) finite elements. The flow domain $\Omega^\varepsilon$ is resolved by approx. 330 000 elements and an adaptive mesh is used. To make the comparison of the pore-scale and macroscale results easier, the pore-scale simulations are averaged (profile: pore-scale). We follow the averaging strategy proposed in~\cite{Lacis_etal_20} and use ensemble averaging to eliminate the microscopic variations. 
Averaging samples are generated by moving the solid inclusions of the porous medium in horizontal direction, hence, pore-scale oscillations in vertical direction still remain in the averaged results. 

The macroscale problem~\eqref{eq:macro-ff} and \eqref{eq:macro-pm} with the classical set~\eqref{eq:IC-mass}--\eqref{eq:IC-BJJ} or the new set~\eqref{eq:NEW-mass}--\eqref{eq:NEW-tangential} of interface conditions is discretized using the second order finite volume method with staggered grids and solved using our in-house \textsc{C++} software. Hereby, the computational domains $\Omega_\FF$ and $\Omega_\PM$ are partitioned into squares with length $h = 10^{-3}$ and the meshes are conforming at the interface~$\Sigma$.

\subsection{Computation of effective properties}
To solve the macroscale problem, effective parameters appearing in the interface conditions and the porous-medium permeability are needed. The permeability tensor $\ten K^\varepsilon$ is obtained by solving the cell problems~\eqref{eq:cell-prob} and  using formula~\eqref{eq:permeability} afterwards. The cell problems~\eqref{eq:cell-prob} are solved by~\texttt{FreeFEM++} using the Taylor--Hood elements and an adaptive mesh with approx.~\mbox{30 000} elements.
To obtain the values of the boundary layer constants the corresponding problems~\eqref{eq:BL-t} and \eqref{eq:BL-beta} are solved. We follow the ideas from~\cite{Carraro_etal_13,Carraro_etal_15} and use the cut-off stripe $Z^{bl}_{l}= Z^{bl} \cap \left((0,1) \times (-l,l)\right)$  for $l=4$ as proposed in~\cite{Carraro_etal_15}.  
These computations are performed also using~\texttt{FreeFEM++} with the Taylor--Hood elements and the fluid part of the cut-off stripe $Z^{bl}_l$ is partitioned into approx.~\mbox{120 000} elements. 

The permeability values and boundary layer constants for two  geometrical configurations considered in this section are presented in~\cref{tab:effective-properties}. 
Note that the values $k_{ij}$ for $i,j=1,2$  have to be scaled by $\varepsilon^2$ to obtain the permeability values as given in~\cref{eq:permeability}. Moreover, the second components of the boundary layer constants vanish,   $M^{j,bl}_2=0$ for $j=1,2$ and $N_2^{bl}=0$. 
The constants in gray presented in~\cref{tab:effective-properties} are zero when solving the cell and boundary layer problems exactly. 
\begin{table}[htbp]
  \caption{Permeability values and boundary layer constants for two different porous-medium geometries.}
  \includegraphics[scale=0.875]{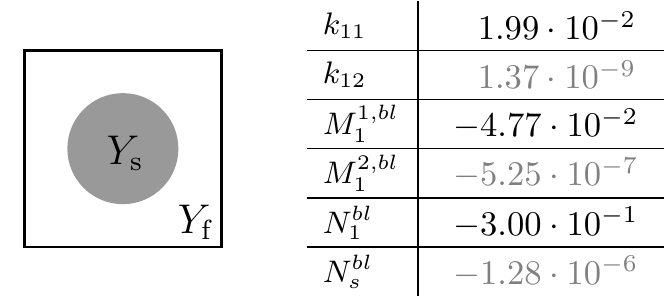} \hspace*{2ex}
  \includegraphics[scale=0.875]{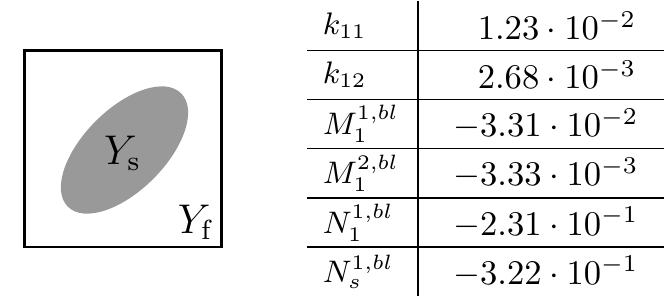} 
  \label{tab:effective-properties}
\end{table}

\subsection{Interface location}

For the macroscale numerical simulations with the classical set of interface conditions the location of the sharp interface $\Sigma$ is uncertain. 
Within this paper, two interface locations (\cref{fig:geometry}b) are investigated: i)~the interface $\Sigma_\textnormal{d}$ is located directly on top of the first row of solid obstacles as proposed in the literature for circular solid inclusions~\cite{Lacis_Bagheri_17,Rybak_etal_19} and ii) the interface $\Sigma_0$ is located on top of the periodicity cells ($x_2=0$) which is the same position as for the new coupling conditions. Numerical simulation results for the Stokes--Darcy problem with the classical interface conditions are labeled according to the interface location (profiles: macroscale,~$\Sigma_{\text{d}}$ or macroscale,~$\Sigma_0$).

The interface position for the newly developed coupling conditions can be set at any distance of order $\mathcal{O}(\varepsilon)$ from the first row of solid inclusions~\cite{Jaeger_etal_01}. This is also the case for the interface $S$ in the boundary layer stripe. We note that a change in the interface position leads to a change in the boundary layer constants only~\cite{Jaeger_etal_01}. For the simulations with the new coupling  conditions~\eqref{eq:NEW-mass}--\eqref{eq:NEW-tangential} the interface $\Sigma_0$ is considered (profile: macroscale, new) as mentioned above.

\subsection{Validation cases}\label{sec:numerical-examples}
In this section, we present three test cases to validate the newly developed interface conditions: two settings with isotropic porous media (validation cases 1 and 2 in \cref{sec:validation-case-1,sec:validation-case-2}) and one with an anisotropic medium (validation case 3 in \cref{sec:validation-case-3}). Validation case~1 
corresponds to the periodic boundary conditions which are needed for the theoretical derivation. However, the
pro-posed 
conditions are not limited to such boundary value problems (validation cases 2 and 3). Further, to demonstrate the advantage of 
the proposed coupling conditions~\eqref{eq:NEW-mass}--\eqref{eq:NEW-tangential}, we compare them to the classical conditions~\eqref{eq:IC-mass}--\eqref{eq:IC-BJJ}.
For all validation cases we consider the free-flow region $\Omega_\FF=(0,1)\times(0,0.5)$ and the porous-medium domain $\Omega_\PM=(0,1)\times(-0.5,0)$ divided by the sharp interface $\Sigma =(0,1)\times \{0\}$. The porous medium consists of $20 \times 10$ solid inclusions which leads to $\varepsilon=1/20$ for all test cases. For the classical interface conditions, we take $\alpha=1$ and $\sqrt{\ten K^\varepsilon} \colon = \sqrt{(\ten K^\varepsilon \vec \tau) \vdot \vec \tau}$.

\subsubsection{Validation case 1}\label{sec:validation-case-1}

In this test case, we consider a flow problem with periodic boundary conditions which corresponds to the theoretical derivation of the new conditions. The porous medium is isotropic, the solid grains are circular with radius $r=0.25\varepsilon$. The effective parameters for this pore geometry are presented in~\cref{tab:effective-properties}.
The boundary conditions for the pore-scale and the macroscale problem are presented in~\cref{fig:setting-validation-case-1}a and the pore-scale velocity field is shown in~\cref{fig:setting-validation-case-1}b. The fluid flow is arbitrary to the fluid--porous interface, especially in the horizontal middle and near the lateral boundaries of the flow domain.

\begin{figure}[!ht]
\centering
    \begin{minipage}{0.05\textwidth} 
        \vspace{-22ex}(a)  
   \end{minipage}
   \hspace{-4.ex}
   \begin{minipage}{0.475\textwidth} 
        \includegraphics[width=0.9\textwidth]{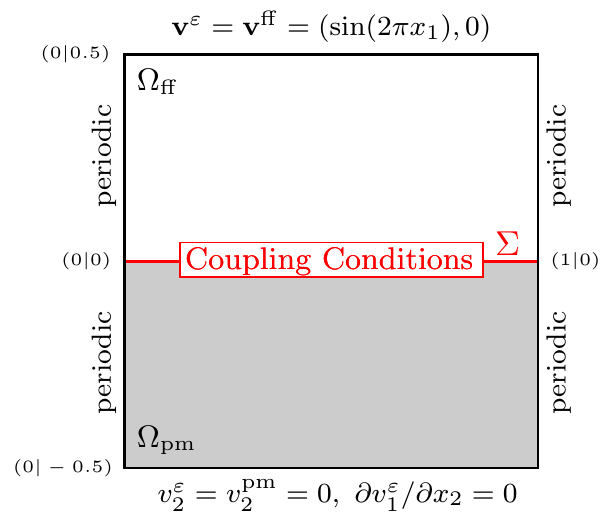} 
   \end{minipage}
   \hspace{-3.5ex}
       \begin{minipage}{0.05\textwidth} 
        \vspace{-22ex}(b)
   \end{minipage}
   \hspace{-1.5ex}
   \begin{minipage}{0.475\textwidth} 
        \includegraphics[width=0.9\textwidth]{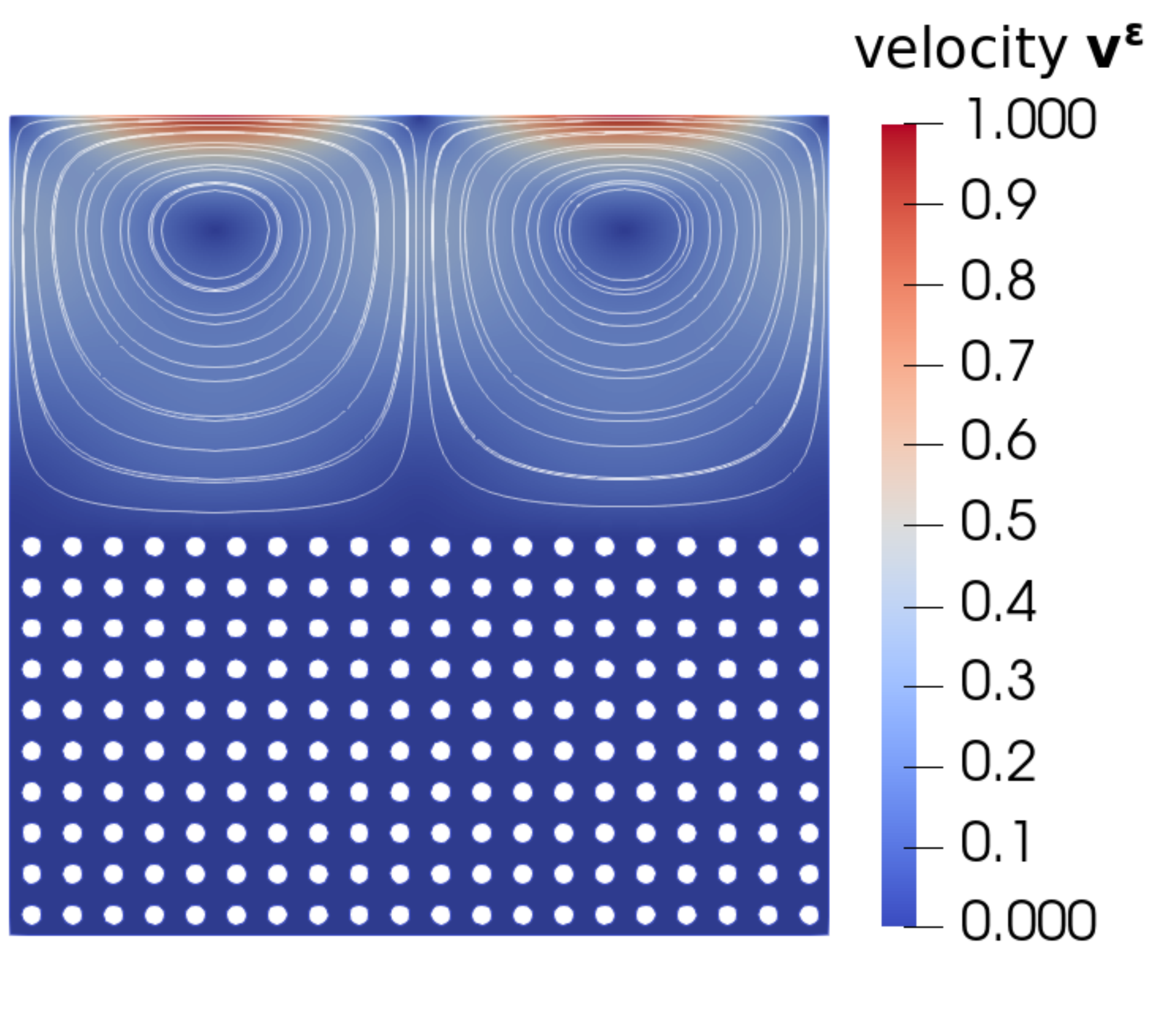} 
   \end{minipage}
   \caption[width=\linewidth]{Flow problem (a) and pore-scale velocity field (b) for validation case~1.}
   \label{fig:setting-validation-case-1}
\end{figure}

\Cref{fig:validation-case-1} provides velocity and pressure profiles for different cross-sections. 
We observe that the profiles of the pore-scale velocity and pressure and the profiles of the macroscale solutions with both classical and new coupling conditions fit well for this case. However, the new conditions provide more accurate results.
\begin{figure}[!ht]
\centering
    \begin{minipage}{0.05\textwidth} 
        \vspace{-30ex}(a)  
   \end{minipage}
   \hspace{-4.5ex}
   \begin{minipage}{0.475\textwidth}
        \includegraphics[width=0.865\textwidth]{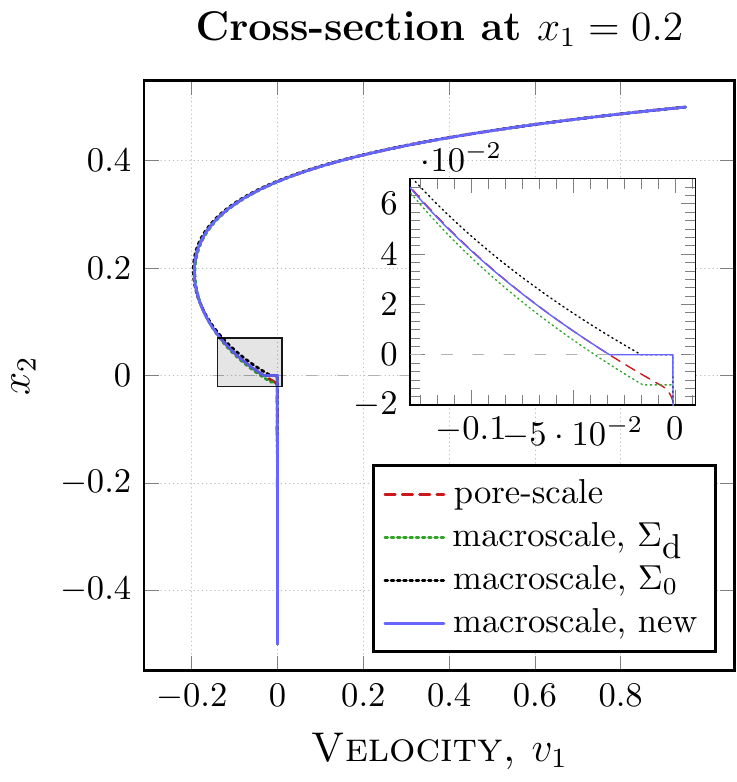}
   \end{minipage}
   \hspace{-1.5ex}
       \begin{minipage}{0.05\textwidth} 
        \vspace{-30ex}(b)
   \end{minipage}
   \hspace{-4.5ex}
   \begin{minipage}{0.475\textwidth} 
        \includegraphics[width=0.865\textwidth]{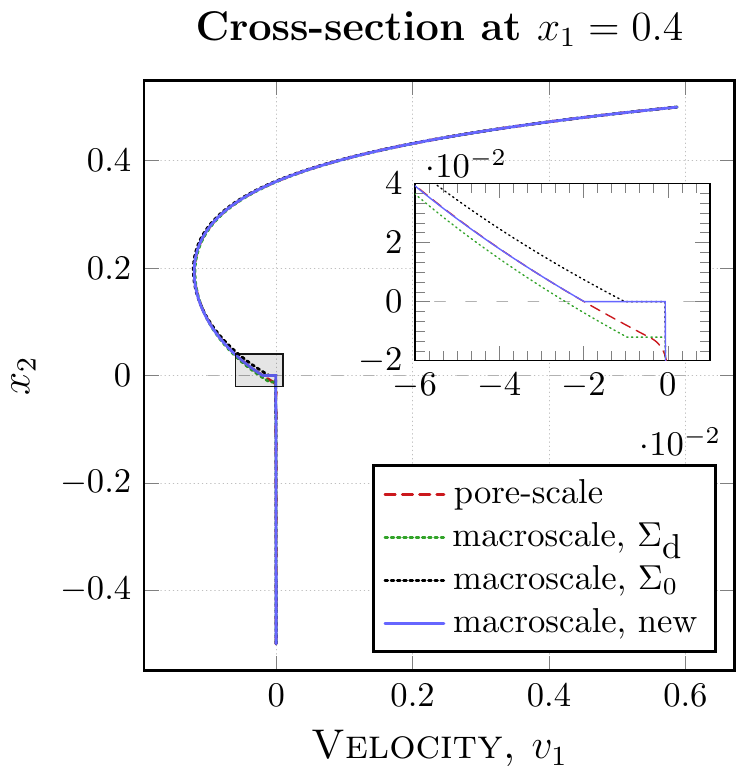}
   \end{minipage}
   \\[1ex]
       \begin{minipage}{0.05\textwidth} 
        \vspace{-30ex}(c)  
   \end{minipage}
   \hspace{-4.5ex}
   \begin{minipage}{0.475\textwidth} 
        \includegraphics[width=0.865\textwidth]{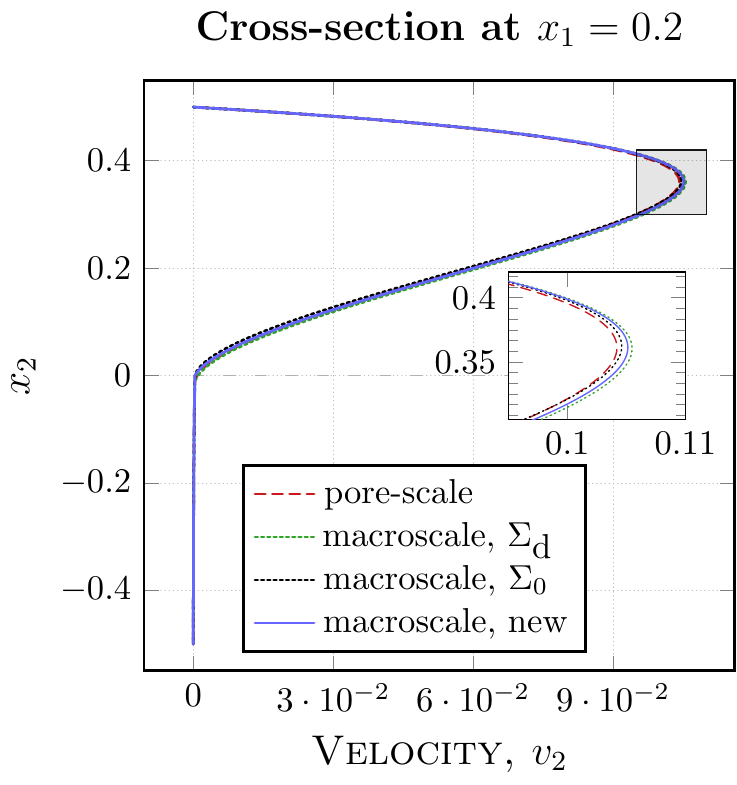}
   \end{minipage}
   \hspace{-1.5ex}
       \begin{minipage}{0.05\textwidth} 
        \vspace{-30ex}(d)
   \end{minipage}
   \hspace{-4.5ex}
   \begin{minipage}{0.475\textwidth} 
        \includegraphics[width=0.865\textwidth]{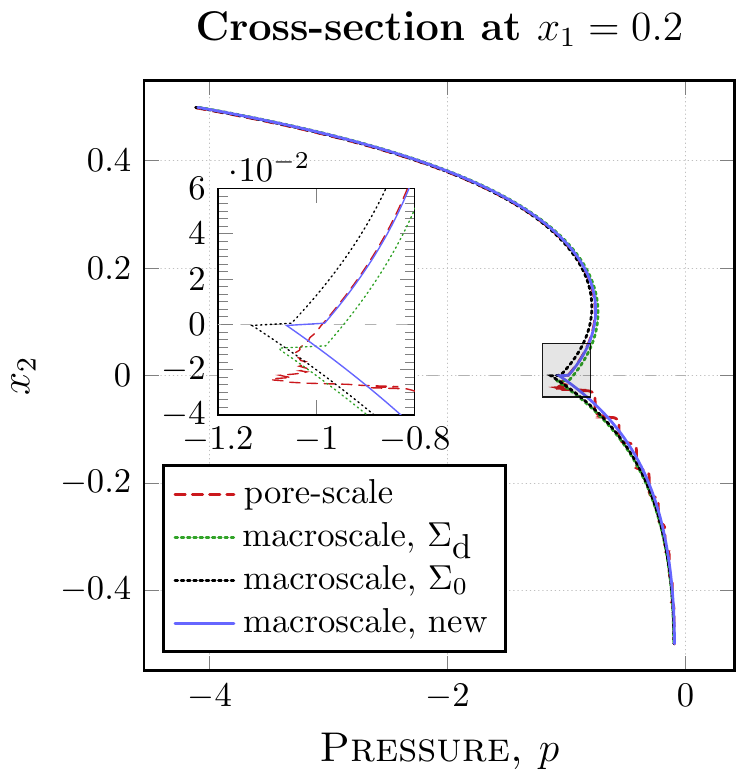}
   \end{minipage}
   \caption[width=\linewidth]{Velocity and pressure profiles for validation case 1.}
   \label{fig:validation-case-1}
\end{figure}

\subsubsection{Validation case 2}\label{sec:validation-case-2}
With this test case, we demonstrate that the newly developed interface
con-ditions 
are valid also for more general coupled problems with fluid flow arbitrary to the porous medium. We consider the same pore geometry as in~\cref{sec:validation-case-1}. Hence, the permeability tensor and the boundary layer constants are identical to those in validation case 1 (\cref{tab:effective-properties}). 

\begin{figure}[htbp]
\centering
    \begin{minipage}{0.05\textwidth} 
        \vspace{-23ex}(a)  
   \end{minipage}
   \hspace{-4.ex}
   \begin{minipage}{0.475\textwidth} 
        \includegraphics[width=0.875\textwidth]{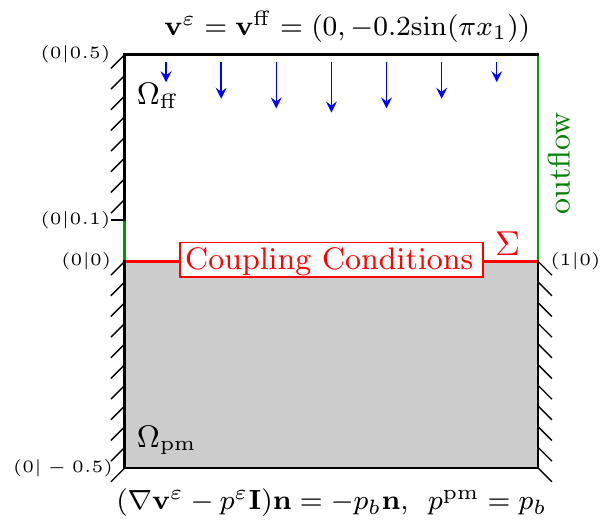}
   \end{minipage}
   \hspace{-3.5ex}
       \begin{minipage}{0.05\textwidth} 
        \vspace{-23ex}(b)
   \end{minipage}
   \hspace{-1.5ex}
   \begin{minipage}{0.475\textwidth} 
        \includegraphics[width=0.85\textwidth]{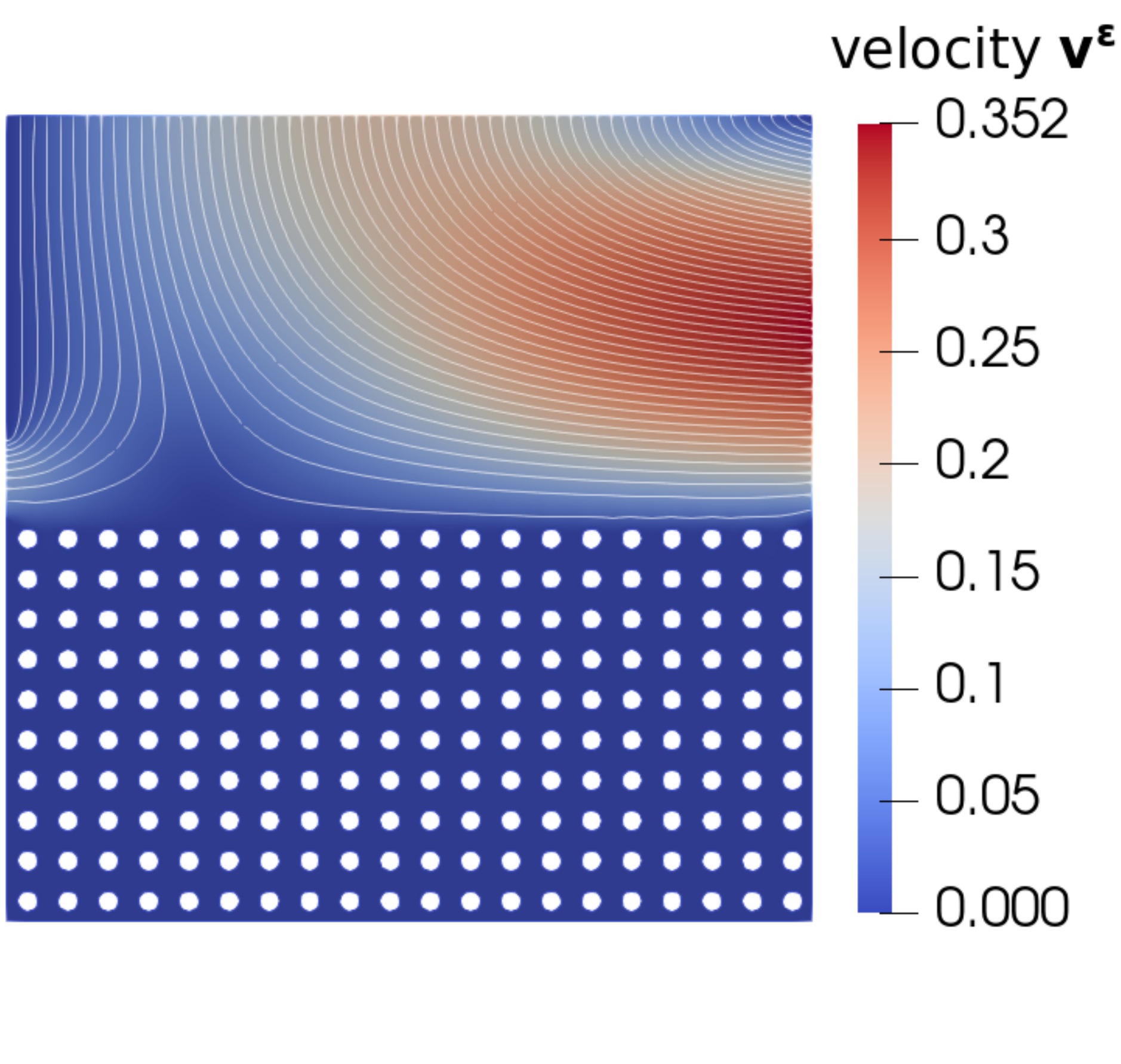}
   \end{minipage}
   \caption[width=\linewidth]{Flow problem for validation cases~2, 3~(a). Pore-scale velocity for validation case~2 (b).}
   \label{fig:setting-validation-case-2}
\end{figure}

The boundary conditions for this validation case are presented in~\cref{fig:setting-validation-case-2}a,  where $p_b=(10^{-6}-x_1)$. The 'outflow' boundary condition is given by $( \nabla \vec v - p\ten I) \vec n=\vec0$, where $\vec v = \vec v^\varepsilon$, $p = p^\varepsilon$ for the pore-scale problem and $\vec v = \vec v^\FF$, $p=p^\FF$ for the macroscale problem. The pore-scale velocity field of this flow system is provided in~\cref{fig:setting-validation-case-2}b. One can observe that within the left part of the flow region ($x_1 < 0.5$) the flow is arbitrary to the fluid--porous interface, whereas in the right part ($x_1 > 0.5$) the flow is almost parallel to the porous medium.

\Cref{fig:validation-case-2} shows velocity and pressure profiles corresponding to
the pore-scale and macroscale problems at different cross-sections. 
We provide profiles for the tangential velocity component at $x_1=0.1$ and $x_1=0.2$ where the flow is non-parallel to the interface. 
Numerical simulations with the classical coupling conditions do not match to the pore-scale results no matter which interface location, $\Sigma_0$ or $\Sigma_{\text d}$, is chosen (\cref{fig:validation-case-2}a and \cref{fig:validation-case-2}b), although the results with $\Sigma_0$ provide a slightly better fitting. In contrast, the velocity profiles computed with the new interface conditions are almost identical to the averaged pore-scale velocity profiles (\cref{fig:validation-case-2}a and \cref{fig:validation-case-2}b).
In~\cref{fig:validation-case-2}c we provide the profiles for the normal component of velocity at $x_1=0.1$. The simulation result with the new interface conditions fits  very well to the averaged pore-scale velocity profile. The velocity computed using the classical set of interface conditions where the interface is $\Sigma_0$, i.e. the same as for the new conditions, are in acceptable agreement to the pore-scale results. However, when the interface is $\Sigma_{\text d}$, the normal velocity has a completely different shape (\cref{fig:validation-case-2}c). In \cref{fig:validation-case-2}d we present the pressure profiles at $x_1=0.2$. The pressure computed with the new set of interface conditions agrees very well to the averaged pore-scale pressure, what is not the case for the classical conditions. We observe, that the choice of $\Sigma_{\text d}$ provides a better agreement for the pressure, whereas for the velocity the interface location $\Sigma_0$ seems to be more suitable. 
Similar observations are obtained for other cross-sections.

\begin{figure}[!ht]
\centering
    \begin{minipage}{0.05\textwidth} 
        \vspace{-30ex}(a)  
   \end{minipage}
   \hspace{-4.5ex}
   \begin{minipage}{0.475\textwidth} 
        \includegraphics[width=0.865\textwidth]{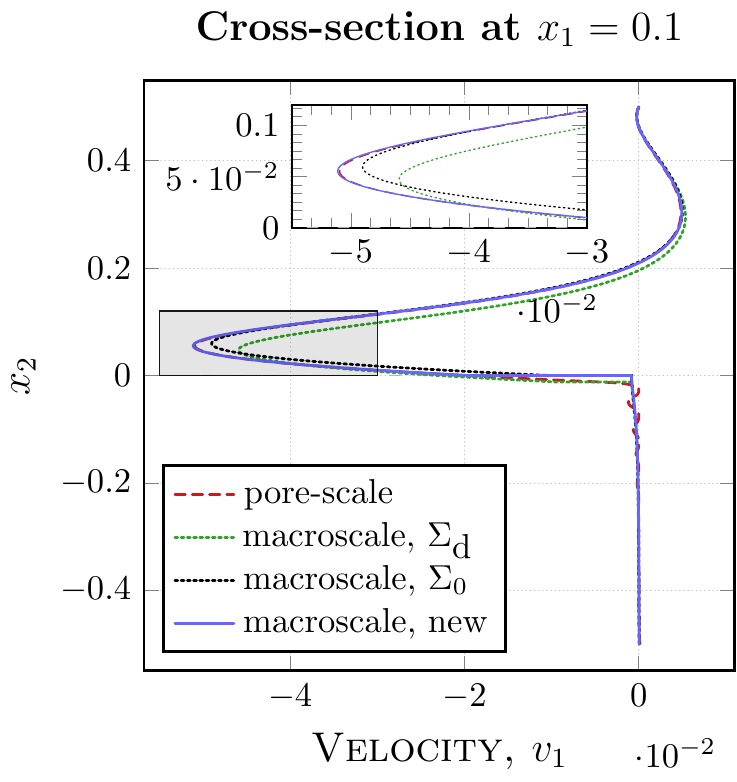}
   \end{minipage}
   \hspace{-1.5ex}
       \begin{minipage}{0.05\textwidth} 
        \vspace{-30ex}(b)
   \end{minipage}
   \hspace{-4.5ex}
   \begin{minipage}{0.475\textwidth} 
        \includegraphics[width=0.865\textwidth]{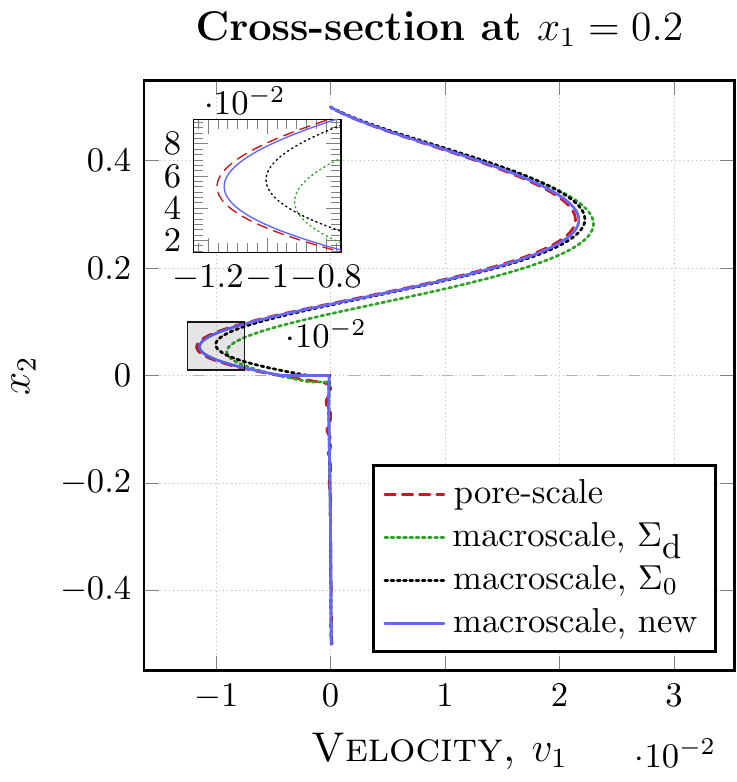}
   \end{minipage}
   \\[1ex]
       \begin{minipage}{0.05\textwidth} 
        \vspace{-30ex}(c)  
   \end{minipage}
   \hspace{-4.5ex}
   \begin{minipage}{0.475\textwidth} 
        \includegraphics[width=0.865\textwidth]{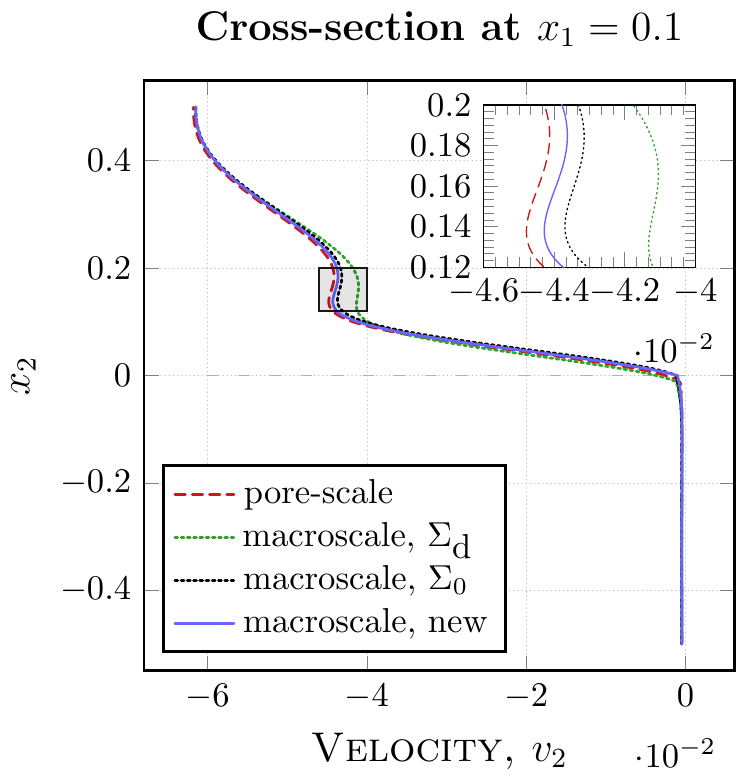}
   \end{minipage}
   \hspace{-1.5ex}
       \begin{minipage}{0.05\textwidth} 
        \vspace{-30ex}(d)
   \end{minipage}
   \hspace{-4.5ex}
   \begin{minipage}{0.475\textwidth} 
        \includegraphics[width=0.865\textwidth]{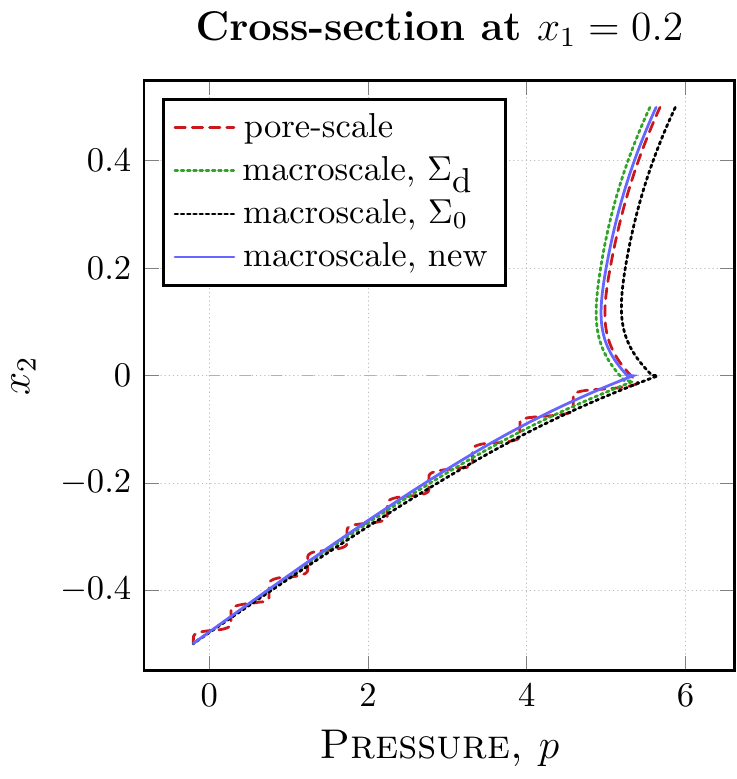}
   \end{minipage}
   \caption[width=\linewidth]{Velocity and pressure profiles for validation case~2.}
   \label{fig:validation-case-2}
\end{figure}

To summarize, the classical coupling conditions fail to represent the flow processes in coupled systems accurately  when the flow is arbitrary to the interface. Different interface locations seem to be correct for velocity and pressure that should not be the case. We note that the value of the Beavers--Joseph parameter $\alpha$ is uncertain as well. In this paper, we consider the most commonly used value in the literature $\alpha=1$.  
More information concerning the choice of $\alpha$  
can be found in~\cite{Eggenweiler_Rybak_20}.
In contrast to the classical coupling conditions, it is evident that the new interface conditions provide results that reflect the pore-scale flow processes accurately.

\subsubsection{Validation case 3}\label{sec:validation-case-3}
The Stokes--Darcy problem is well studied for isotropic porous media (interface location, fitting of parameter $\alpha$, see  e.g.~\cite{Lacis_etal_20,Mierzwiczak_19,Rybak_etal_19}). However, this is not the case for anisotropic media. To demonstrate the advantage of the proposed  interface conditions, we treat anisotropic porous media as well. We consider the geometrical configuration of the porous medium made up of $20 \times 10$ elliptical solid inclusions distributed periodically 
(\cref{fig:validation-case-3}a). In this case, the solid part $Y_\text{s}$ of the unit cell $Y$ consists of the  ellipse 
\[
e(t) = (0.5,0.5) + \operatorname{cos}(0.25\pi)(0.2\operatorname{cos}(t) + 0.4 \operatorname{sin}(t), -0.2\operatorname{cos}(t) + 0.4 \operatorname{sin}(t)), \; 
 \]
$t \in [0,2\pi)$. These inclusions are neither symmetric with respect to the $x_1$- nor
$x_2$-axis. Therefore, we obtain a full permeability tensor $\ten K^\varepsilon$ (\cref{tab:effective-properties}). Such ellipses are also considered in~\cite{Carraro_etal_15}, where the same permeability values are obtained. 
We study the same flow problem as in~\cref{sec:validation-case-2}, i.e. we have the same flow domains and boundary conditions (\cref{fig:setting-validation-case-2}a), only a different porous-medium morphology.

For the classical interface conditions we consider again two interface locations $\Sigma_0$ and $\Sigma_{\text d}$. To the best of our knowledge, there is no recommendation where to locate the interface correctly for non-circular solid inclusions. The interface $\Sigma_0$ is chosen for the newly developed conditions as in \cref{sec:validation-case-1,sec:validation-case-2}.
\begin{figure}[htbp]
\centering
    \begin{minipage}{0.05\textwidth} 
        \vspace{-30ex}(a)  
   \end{minipage}
   \hspace{-2ex}
   \begin{minipage}{0.475\textwidth}
        \includegraphics[width=0.935\textwidth]{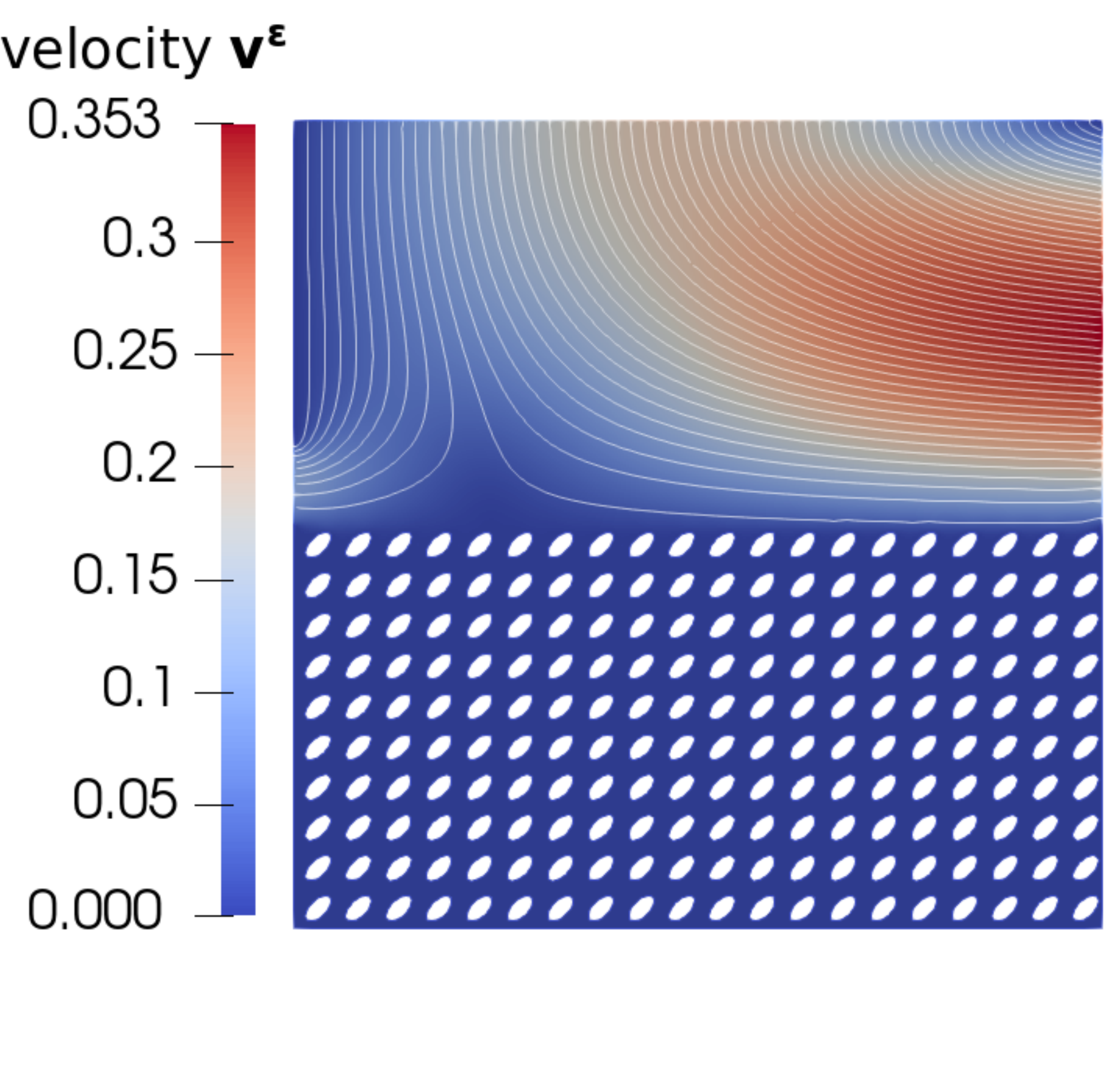}
   \end{minipage}
   \hspace{-0.5ex}
       \begin{minipage}{0.05\textwidth} 
        \vspace{-30ex}(b)
   \end{minipage}
   \hspace{-6ex}
   \begin{minipage}{0.475\textwidth} 
        \includegraphics[width=0.855\textwidth]{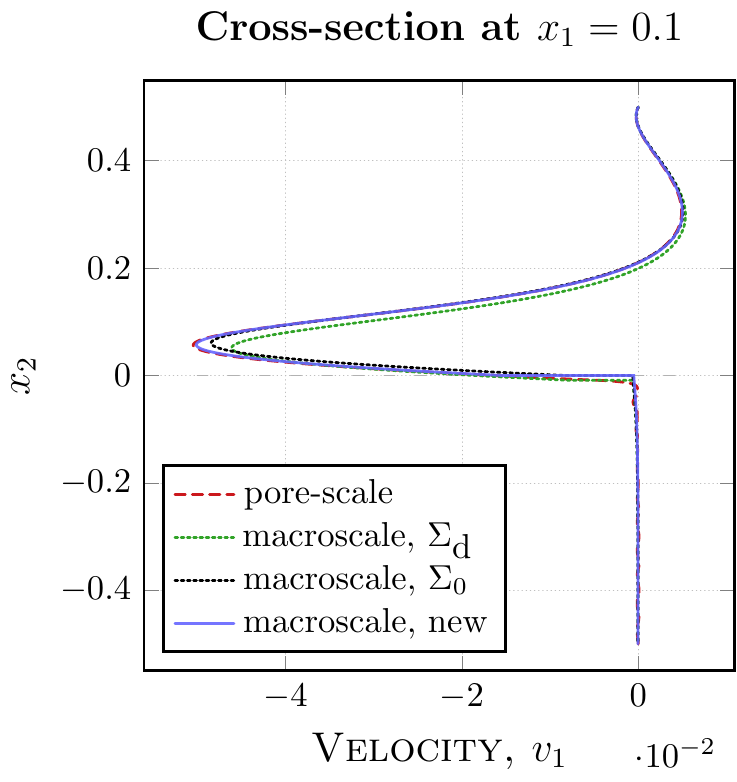}
   \end{minipage}
   \\[1ex]
    \begin{minipage}{0.1\textwidth} 
        \vspace{-30ex}(c)  
   \end{minipage}
   \hspace{-4.5ex}
   \begin{minipage}{0.475\textwidth} 
        \includegraphics[width=0.855\textwidth]{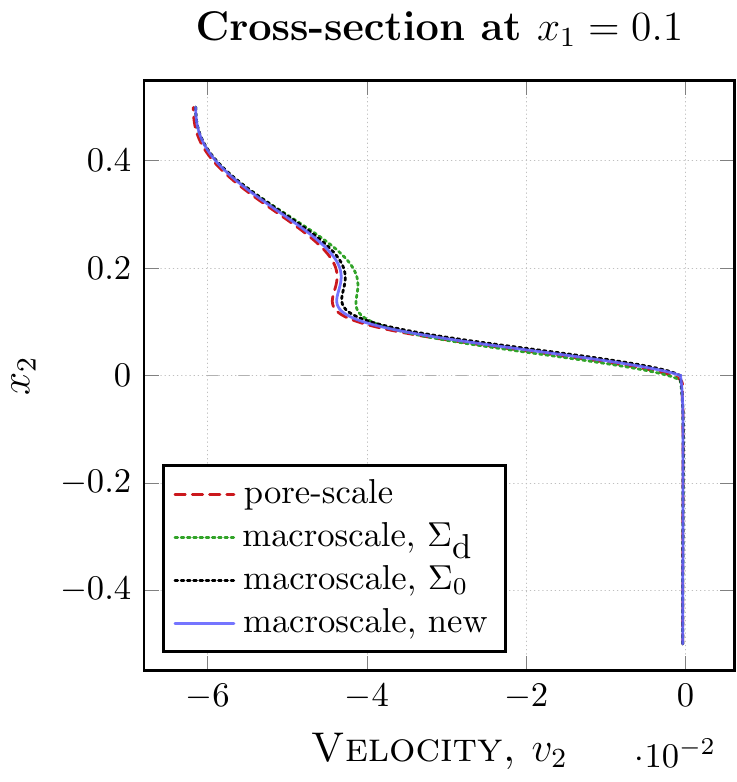}
   \end{minipage}
   \hspace{-2.5ex}
       \begin{minipage}{0.05\textwidth} 
        \vspace{-30ex}(d)
   \end{minipage}
   \hspace{-6ex}
   \begin{minipage}{0.475\textwidth} 
        \includegraphics[width=0.855\textwidth]{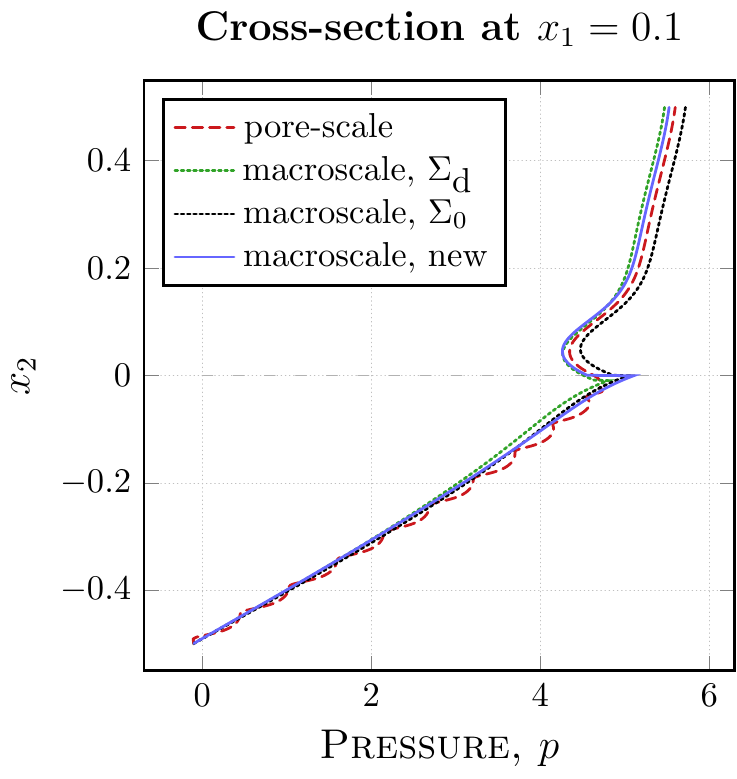}
   \end{minipage}
   \caption[width=\linewidth]{Pore-scale velocity field (a) and velocity and pressure profiles (b)--(d) for validation case 3.}
   \label{fig:validation-case-3}
\end{figure}

In~\cref{fig:validation-case-3} we present profiles of the velocity and pressure at $x_1=0.1$ only, due to similar results as in \cref{sec:validation-case-2}.
We observe that the macroscale simulation results with the new set of interface conditions again agree very well to the pore-scale results, whereas the results with the classical interface conditions do not. The profiles are similar to those presented in~\cref{fig:validation-case-2}, the differences are due to different porous-medium morphology. Note that for this pore geometry, all effective coefficients are nonzero (\cref{tab:effective-properties}).

In conclusion, the three flow problems presented in this section demonstrate the advantage of the newly derived interface conditions~\eqref{eq:NEW-mass}--\eqref{eq:NEW-tangential} over the classical conditions~\eqref{eq:IC-mass}--\eqref{eq:IC-BJJ} to couple free-flow and porous-medium problems with
arbi-trary 
flow direction. 
Moreover, for some flow problems the classical set of coupling conditions is even unsuitable to describe the pore-scale flow processes.

\section{Conclusions}
\label{sec:conclusions}
In this paper, we propose new interface conditions to couple the Stokes equations to the Darcy law for arbitrary flows to the fluid--porous interface. These 
con-ditions are rigorously derived by means of homogenization and boundary layer theory. Starting from the pore-scale perspective we obtained a macroscale  model with new interface conditions. The derived coupling conditions~\eqref{eq:NEW-mass}--\eqref{eq:NEW-tangential} reduce to the ones developed in \cite{Carraro_etal_13,Jaeger_Mikelic_96, Jaeger_Mikelic_00,Jaeger_Mikelic_09,Jaeger_etal_01} using similar  techniques, when the same assumptions on the flow are made. However, our conditions are more general. 

With the new conditions~\eqref{eq:NEW-mass}--\eqref{eq:NEW-tangential} we recovered the mass balance equation~\eqref{eq:IC-mass}, developed an extension of the balance of normal forces across the interface~\eqref{eq:IC-momentum} and an alternative to the Beavers--Joseph coupling condition~\eqref{eq:IC-BJJ}.
The newly developed coupling conditions are validated numerically by comparison of pore-scale to macroscale simulation results. In addition, the proposed interface conditions are compared to the classical ones. It is shown that the classical conditions are less accurate or even unsuitable for Stokes--Darcy problems with arbitrary flows, whereas the new conditions reflect the pore-scale flow processes accurately.

For the new coupling conditions all effective coefficients are computed 
explicit-ly 
based on the pore geometry and the choice of interface position. Thus, no parameter fitting is needed. This is an immense advantage compared to the classical interface conditions where one has to determine the correct interface location and the optimal value of parameter $\alpha$ for each flow problem.
Note that the correct location of the sharp fluid--porous interface is still an open question for the Stokes--Darcy problem with the classical set of interface conditions, especially for anisotropic porous media. 

\section*{Acknowledgments}
The work is funded by the Deutsche Forschungsgemeinschaft (DFG, German Research Foundation) -- Project Number 327154368 -- SFB 1313.

\bibliographystyle{siamplain}
\bibliography{references}
\end{document}